\documentclass{article}
\usepackage{graphicx} 
\usepackage[a4paper, total={6in, 9in}]{geometry}

\usepackage{amsfonts}
\usepackage{amsmath}
\usepackage{amssymb}
\usepackage{amsthm}
\usepackage{bbm}
\usepackage{hyperref}
\usepackage{mathtools}

\newtheorem{theorem}{Theorem}
\newtheorem{lemma}[theorem]{Lemma}
\newtheorem{prop}[theorem]{Proposition}

\newtheorem{definition}{Definition}

\newtheorem{example}{Example}

\newcommand{\n}{\mathbb{N}}
\newcommand{\z}{\mathbb{Z}}
\newcommand{\R}{\mathbb{R}}

\newcommand{\rd}{\mathbb{R}^d}

\newcommand{\borel}{\mathcal{B}}

\newcommand{\e}{\mathbb{E}}

\newcommand{\p}{\mathbb{P}}

\newcommand{\abs}[1]{\left|#1\right|}
\newcommand{\norm}[1]{\left\|#1\right\|}
\newcommand{\ind}{\mathbbm{1}}
\newcommand{\dx}{\mathrm{d}}
\newcommand{\dotp}[2]{\left< #1,#2 \right >}

\newcommand{\ppp}{\eta}
\newcommand{\markdist}{\mathbb{Q}}

\newcommand{\as}{a.\,s.\ }
\newcommand{\ie}{i.\,e.\ }
\newcommand{\wrt}{w.\,r.\,t.\ }

\newcommand{\Msettemp}{\mathcal{M}_{\text{temp},\delta}}
\newcommand{\Msetl}{\mathcal{M}_{l,\delta}}

\newcommand{\bfx}{{\bf x}}
\newcommand{\bfy}{{\bf y}}

\newcommand{\hatMsetl}{\Hat{\mathcal{M}}_{l,\delta}}
\newcommand{\Fd}{\mathcal{F}_d}
\newcommand{\FdAp}{\mathcal{F}_d'}
\newcommand{\Cd}{\mathcal{C}(\rd)}

\newcommand{\pd}{\rho}

\newcommand{\tailF}[1]{\mathsf{F}_{#1}}
\newcommand{\tailFnula}[1]{\mathsf{F}^0_{#1}}

\newcommand{\interior}{\operatorname{int}}
\newcommand{\closure}{\operatorname{clo}}

\newcommand{\keywords}[1]{\par\noindent\textbf{Keywords: }#1}

\newcommand{\msc}[1]{\par\noindent\textbf{MSC: }#1}

\title{Mixing Properties of Random Laguerre Tessellations}

\author{Zbyněk Pawlas\thanks{Department of Probability and Mathematical Statistics, Faculty of Mathematics and Physics, Charles University, Czechia,  e-mails: \textit{zbynek.pawlas@matfyz.cuni.cz, petrakova@karlin.mff.cuni.cz}.}\,\, and  Martina Švarc Petráková\footnotemark[1]}

\date{}

\begin{document}

\maketitle

\begin{abstract}
  In this paper, we study the random Laguerre tessellation, a weighted generalization of the Voronoi tessellation, generated by a general stationary marked point process. We first derive a nearly optimal sufficient condition on the generating marked point process which ensures that the resulting random Laguerre tessellation is well-defined, utilising the concept of tempered configurations to handle potentially unbounded weights. We then investigate how the three mixing properties -- ergodicity, mixing and $\alpha$-mixing -- of the generating marked point process are preserved for the corresponding random Laguerre tessellation. Our approach combines standard approximation arguments with the properties of tempered configurations and the measurability of the Laguerre mapping.
\end{abstract}

\keywords{random Laguerre tessellation, ergodicity, mixing, $\alpha$-mixing, marked point process, tempered configurations}
\msc{60D05, 37A25, 60G55} 

\section{Introduction}
Random tessellations constitute a prominent class of stochastic geometry models with applications in materials science, biology and other fields. Among other models, random Voronoi tessellations, their weighted generalizations and the corresponding dual tessellations have been studied extensively, see \cite{ar:RJ25} for a recent comprehensive overview. 
In this paper, we focus on Laguerre tessellations, which form the most widely used weighted generalization of Voronoi tessellations.

Let $z \in \rd$ and $\bfx = (x,m) \in \rd \times \R$. We call $x$ the nucleus and $m$ the weight or mark. The power distance of a point $z$ to a weighted point $\bfx$ is defined as
\begin{equation*} 
    \pd(z,\bfx) \coloneqq \norm{x-z}^2 + m. 
\end{equation*} 

Let $\varphi \subseteq\rd \times \R$ be an at most countable set of weighted points (called the set of generators). For $\bfx \in \varphi$ define the Laguerre cell associated with $\bfx$ as 
\begin{equation*} 
L(\bfx,\varphi) \coloneqq \{z \in \rd: \pd(z,\bfx) \leq \pd(z,\bfy) \; \forall \bfy \in \varphi\},
\end{equation*}
and the Laguerre diagram generated by $\varphi$ as 
\begin{equation}\label{def:LaguerreDiagram}
L(\varphi) \coloneqq \{L({\bf x},\varphi): {\bf x} \in \varphi,\, \interior(L({\bf x},\varphi)) \neq \emptyset\},
\end{equation}
where $\interior(\cdot)$ denotes the interior of a Borel subset of $\rd$. For a suitable set $\varphi$ of generators, the Laguerre diagram $L(\varphi)$ is a tessellation of $\rd$, \ie a locally finite partition of $\rd$ into compact convex sets with disjoint non-empty interiors. If all weights in $\varphi$ are equal, then the cells are determined based on the Euclidean distance to the nuclei and $L(\varphi)$ becomes the Voronoi tessellation. A random Laguerre tessellation is obtained by taking the set of generators to be a suitable stationary marked point process $\ppp$ on $\rd \times \R$ with finite intensity. 

The first systematic study of random Laguerre tessellations was carried out in \cite{ar:LZ08}, focusing primarily on the Poisson--Laguerre tessellation, \ie the Laguerre tessellation generated by a stationary marked Poisson point process. Several geometric characteristics of typical $k$-faces, as well as various limiting results, were derived therein. More recently, Poisson--Laguerre tessellations generated by a (not necessarily marked) Poisson point process on $\rd \times \R$ were investigated in \cite{ar:GWL25, ar:GWL26}, focusing on the sectional properties and convergence of the tessellation and its typical cells. This work followed the series of papers \cite{ar:GKT22a, ar:GKT22b, ar:GKT22c, ar:GKT23}, where three particular models of Poisson--Laguerre tessellation (called $\beta$-, $\beta'$- and Gaussian Voronoi tessellation) and mainly their dual tessellations were studied. The sectional properties of these three models were subsequently analysed in \cite{ar:GKT24}. Statistical inference for Poisson--Laguerre tessellations was considered in \cite{ar:FPY20, ar:JJV25}. The non-Poissonian case has received considerably less attention. Gibbs--Laguerre tessellations were studied in \cite{ar:CKR24, ar:JS20}, in both cases under the assumption of bounded weights, while their applications as stochastic models were considered in \cite{ar:Schl24, ar:SMB22, ar:Setal21}. 

The aim of this paper is to develop a unified theory of random Laguerre tessellations generated by a general marked point process on $\rd \times \R$. Following closely \cite{ar:LZ08}, we first establish a sufficient assumption on the random set of generators ensuring that the random Laguerre tessellation is well-defined (see Theorem \ref{thm:Main} (a)). By exploiting the concept of tempered configurations from \cite{ar:RZ20}, this condition requires a nearly optimal moment assumption on the stationary mark distribution. Afterwards, we focus on how the different mixing properties of the random set $\ppp$ of generators are preserved for the random Laguerre tessellation (see Theorem \ref{thm:Main} (b) and (c)) generated by $\ppp$. By a mixing property, we mean, roughly speaking, some kind of asymptotic independence of spatially separated parts of the tessellation. In particular, we consider ergodicity, mixing and $\alpha$-mixing of random Laguerre tessellations in this paper. 

The remainder of the paper is organized as follows. In Section \ref{sec:NotationAndResult}, we present the necessary notation followed up by our three main results collected in Theorem \ref{thm:Main}. Section \ref{sec:LaguerreAndTempered} deals with the formal introduction of random Laguerre tessellations -- particularly the measurability of the mapping $\varphi \mapsto L(\varphi)$ in Theorem \ref{thm:MeasurabilityOfL} -- and the employ of the set of tempered configurations within the Laguerre formalism. The proofs of Theorems \ref{thm:Main} and \ref{thm:MeasurabilityOfL} can be found in Section \ref{sec:Proofs}. We finish our paper by Section \ref{sec:Examples}, in which we present several examples of marked point processes for which the random Laguerre tessellation is well-defined and discuss its mixing properties.

\section{Notation and Main Result}\label{sec:NotationAndResult}

Let $d \geq 2$ be our dimension and denote by $\lambda^d$ the $d$-dimensional Lebesgue measure on $\rd$. Let $ \borel(\rd)$ be the Borel $\sigma$-algebra on $\rd$ and denote by $B(x,r)$ the closed ball with centre $x \in \rd$ and radius $r\geq 0$. We frequently use the notation $B^d\coloneqq B(0,1)$ and $rB^d\coloneqq B(0,r)$. Throughout this text, we consider to have an underlying probability space $(\Omega, \mathcal{A}, \p)$. 

First, we present necessary concepts from the point process theory, see \cite{bo:DVJ03a,bo:DVJ08b,bo:LP17,bo:SW08} for further background.
A point process on $\rd$ is a random element in the space $(\mathsf{N}(\rd),\mathcal{N}(\rd))$ of all locally finite counting measures on $(\rd,\borel(\rd))$ equipped with the smallest $\sigma$-algebra for which all evaluation mappings $\varphi \mapsto \varphi(B)$, $B \in \borel(\rd )$, are measurable. A point process $\ppp'$ on $\rd$ is called simple if $\p(\ppp' \in \mathsf{N}_s(\rd))=1$, where $\mathsf{N}_s(\rd)\coloneqq \{\varphi \in \mathsf{N}(\rd): \varphi(\{x\}) \leq 1 \; \forall x \in \rd\}$. As usual, we identify the simple point process $\ppp'$ with its support. 

A point process $\ppp$ on $\rd \times \R$ is called a marked point process if $\p(\ppp \in \mathsf{N}_{m}(\rd\times \R)) = 1$, where $\mathsf{N}_{m}(\rd\times \R)\coloneqq \{\varphi \in \mathsf{N}(\rd\times \R): \varphi'(\cdot)\coloneqq \varphi(\cdot \times \R) \in \mathsf{N}(\rd)\}$, \ie if $\ppp'$ is a point process on $\rd$. We call $\ppp'$ the ground process. A marked point process is simple if its ground process is simple. 
We say that a marked point process $\ppp$ on $\rd \times \R$ is stationary if $T_z(\ppp)\overset{D}{=}\ppp$ for any $z \in \rd$, where $T_z: \mathsf{N}_m(\rd\times \R) \rightarrow \mathsf{N}_m(\rd\times \R)$ is the shift mapping defined as 
\begin{equation*}
    T_z(\varphi)\coloneqq \varphi + z\coloneqq \sum_{(x,m) \in \varphi} \delta_{(x+z,m)}.
\end{equation*} 
If, in addition, the intensity measure of the ground process $\ppp'$ is locally finite (\ie finite on compacts), then there exist $\gamma > 0$ and a probability distribution $\markdist$ on $\R$ such that for any bounded $B \in \borel(\rd)$ and any measurable $A\subseteq\R$,
\begin{equation*}
    \e \ppp(B \times A) = \gamma \lambda^d(B) \markdist(A).
\end{equation*}
 We call $\markdist$ the stationary mark distribution of $\ppp$ and $M \sim \markdist$ the typical mark of $\ppp$. From now on, we assume that $\ppp$ is non-degenerate and simple and that $\ppp'$ has a locally finite intensity measure.

Next, we present three types of mixing properties that are considered in this paper. First, recall the standard definition of the $\alpha$-mixing coefficient between two $\sigma$-algebras $\mathcal{A}_1$ and $\mathcal{A}_2$, 
\begin{equation*} 
    \alpha(\mathcal{A}_1,\mathcal{A}_2)\coloneqq \sup\{\abs{\p(A_1 \cap A_2) - \p(A_1) \p(A_2)}: A_i \in \mathcal{A}_i, i = 1,2 \}.
\end{equation*}
For further details on $\alpha$-mixing we refer to \cite{bo:Brad07}. Let $\ppp$ be a stationary marked point process on $\rd \times \R$ and denote by $\sigma(\ppp \cap (B \times \R))$ the $\sigma$-algebra generated by the restriction of $\ppp$ to $B \times \R$, $B \in \borel(\rd)$. The $\alpha$-mixing coefficient for $\ppp$ is then defined as
\begin{align*} 
\begin{split}
    \alpha_\ppp(c_1,c_2;\Delta)\coloneqq \sup_{B_i \in \borel(\rd),\, \text{dist}(B_1,B_2)\geq \Delta,\, \lambda_d(B_i)\leq c_i,\, i = 1,2} \alpha(\sigma(\ppp \cap (B_1 \times \R)),\sigma(\ppp \cap (B_2 \times \R))),
\end{split}
\end{align*}
where $c_1,c_2 \in (0,\infty]$, $\Delta > 0$ and $\operatorname{dist}(B_1,B_2)\coloneqq \inf\{\norm{x_1-x_2}: x_i \in B_i,\, i = 1,2\}$. 

\begin{definition} \label{def:MixingMPP}
Let $\ppp$ be a stationary marked point process on $\rd \times \R$. We say that it is 
\begin{itemize}
    \item[(i)]ergodic, if for all $A,B \in \mathcal{N}(\rd \times \R)$,
\begin{equation*}
\lim_{a \rightarrow \infty} \frac{1}{\lambda^d((-a,a)^d)} \int_{(-a,a)^d} \p(\ppp\in A, T_z(\ppp) \in B) \,\dx z = \p(\ppp \in A)\p(\ppp \in B),
\end{equation*}
    \item[(ii)]mixing, if for any $A, B \in \mathcal{N}(\rd \times \R)$,
\begin{equation*}
    \lim_{\norm{z}\rightarrow \infty} \p(\ppp \in A, T_z(\ppp) \in B) = \p(\ppp \in A) \p(\ppp \in B),
\end{equation*}
    \item[(iii)] $\alpha$-mixing, if for all $0< c < \infty$,
    \begin{equation*}
    \lim_{\Delta \rightarrow \infty}  \alpha_\ppp(c,\infty;\Delta)=0.
    \end{equation*}
\end{itemize}
\end{definition}

The definitions of ergodicity and mixing follow \cite[Definition 12.3.I]{bo:DVJ08b}, while the notion of $\alpha$-mixing for marked point processes follows \cite{ar:BW19, ar:WG09}. It is well-known that $\alpha$-mixing implies mixing, which in turn implies ergodicity.

To formally define a random tessellation, we work with the following spaces. Let $(E,\mathcal{G}(E))$ be a locally compact topological space with countable base and denote by $\mathcal{B}(E)$ its Borel $\sigma$-algebra. Denote by $\mathcal{F}(E)$ the set of closed subsets of $E$ and let $\mathcal{F}'(E)\coloneqq \mathcal{F}(E)\setminus \{\emptyset\}$. For $A \subseteq E$, define 
\begin{align*}
    \mathcal{F}_A&\coloneqq \{F \in \mathcal{F}(E): F \cap A \neq \emptyset\},\\
    \mathcal{F}^A&\coloneqq \{F \in \mathcal{F}(E): F \cap A = \emptyset\}.
\end{align*}
The Fell topology $\tau_{\mathcal{F}}$ on $\mathcal{F}(E)$ is generated by the subbasis $\{\mathcal{F}^C: C \in \mathcal{C}(E)\} \cup \{\mathcal{F}_G: G \in \mathcal{G}(E)\}$, where  $\mathcal{C}(E)$ is the set of compact subsets of $E$. In particular, the space $(\mathcal{F}(E),\tau_{\mathcal{F}})$ is compact and the space $(\mathcal{F}'(E),\tau_{\mathcal{F}})$ is locally compact and both have a countable base, see \cite[Theorem 12.2.1 and Remark (b)]{bo:SW08}. We denote by $\borel(\mathcal{F}(E))$ the $\sigma$-algebra generated by the Fell topology and analogously $\borel(\mathcal{F}'(E))$.  In this paper, we consider two choices of $E$,  the Euclidean space $\rd$ and the set of its non-empty closed subsets, $\FdAp\coloneqq \mathcal{F}'(\rd)$.

\begin{definition} \label{def:tessellation}
    A tessellation of $\rd$ is a locally finite set $\{C_i\}_{i \in \n}$ of compact convex cells $C_i$, with non-empty pairwise disjoint interiors, satisfying $\cup_{i \in \n} C_i = \rd$. The set of all tessellations of $\rd$ is denoted as $\mathbb{M}_d$.
\end{definition}

Recall that local finiteness means that any compact $C \in \mathcal{C}(\rd)$ intersects only finitely many cells $C_i$. It follows from \cite[Lemma 10.1.2]{bo:SW08} that $\mathbb{M}_d $ is a Borel subset of $\mathcal{F}(\FdAp)$, which ensures the correctness of the following definition. 

\begin{definition} \label{def:randomtessellation}
    A random tessellation is a random element $X$ in the space $\mathcal{F}(\FdAp)$ which satisfies $\p(X \in \mathbb{M}_d)=1$. 
\end{definition}

To define the mixing properties for random tessellations, define the shift operators $\mathcal{T}_z$, $z \in \rd$, on the space $\mathcal{F}(\FdAp)$,
\begin{equation*} 
    \mathcal{T}_z(Z)\coloneqq Z+z\coloneqq\{F+z: F \in Z\}, \, \, Z \in \mathcal{F}(\FdAp).
\end{equation*}
A random tessellation $X$ is stationary if $\mathcal{T}_z(X)\overset{D}{=}X$ for any $z \in \rd$. Let $a > 0$ and define two semi-algebras
\begin{align*}
    \begin{split}
        \tailFnula{a}&\coloneqq\{ \mathcal{F}^{C_0}_{C_1,\dots,C_k}: C_i \in \mathcal{C}(\FdAp) \text{ s.t. } C_i \subseteq \mathcal{F}_{aB^d}, i \in \{0,\dots,k\}, k \in \n_0\}, \\
        \tailFnula{-a}&\coloneqq \{ \mathcal{F}^{C_0}_{C_1,\dots,C_k}: C_i \in \mathcal{C}(\FdAp) \text{ s.t. } C_i \subseteq \mathcal{F}_{\rd \setminus aB^d}, i \in \{0,\dots,k\}, k \in \n_0\}.
    \end{split}
\end{align*}
Then the $\sigma$-algebra $\sigma \{ \tailFnula{a}\}$ contains (roughly speaking) all the information about the sets intersecting the ball $aB^d \subseteq\rd$ and similarly for $\sigma \{ \tailFnula{-a}\}$.

\begin{definition} \label{def:MixTess} Let $X$ be a stationary random tessellation. It is called 
\begin{itemize}
\item[(i)]ergodic, if for all $A,B \in \borel(\mathcal{F}(\FdAp))$,
\begin{equation*}
\lim_{a \rightarrow \infty} \frac{1}{\lambda^d((-a,a)^d)} \int_{(-a,a)^d} \p(X \in A, \mathcal{T}_z(X) \in B) \,\dx z = \p(X \in A) \p(X \in B),
\end{equation*}
    \item[(ii)] mixing, if for any $A, B \in \borel(\mathcal{F}(\FdAp))$,
    \begin{equation*}
    \lim_{\norm{z}\rightarrow \infty} \p(X \in A, \mathcal{T}_z(X) \in B) = \p(X \in A) \p(X \in B),
\end{equation*}
\item[(iii)] $\alpha$-mixing, if for any $a > 0$,
\begin{equation*} 
        \lim_{b \rightarrow \infty} \alpha(\sigma_X(\tailFnula{a}),\sigma_X(\tailFnula{-b})) = 0,
    \end{equation*}
where $\sigma_X(\tailFnula{a})\coloneqq \sigma(\{\{X \in A\}: A \in \tailFnula{a}\})$ and $\sigma_X(\tailFnula{-b})\coloneqq\sigma(\{\{X \in B\}: B \in \tailFnula{-b}\})$. 
\end{itemize}
\end{definition}

This definition follows in spirit the approach of \cite{ar:GKT23, ar:H94}. The difference is that we consider pre-images of events from $\borel(\mathcal{F}(\FdAp))$ defined through the cells of the random tessellation, rather than pre-images of events from $\borel(\FdAp)$ defined through its skeleton (\ie the union of the boundaries of the cells). Again, the mixing properties are presented from the weakest to the strongest.

Mixing properties of several random tessellation models have been studied in the literature. Mixing of the Poisson hyperplane tessellation is discussed in \cite[Chapter 7]{bo:HS24} and also in \cite[Section 10.5]{bo:SW08}, where mixing of the Poisson--Voronoi tessellation and its dual is also considered. Mixing of the STIT tessellation has been proved in \cite{ar:LR11}. The absolute regularity (or $\beta$-mixing) of random tessellations, which is not studied in this paper, was considered in \cite{ar:H94} for the random Voronoi tessellation, in \cite{ar:GKT23} for the dual of special models of Poisson--Laguerre tessellations and in \cite{ar:MN16} for the STIT tessellation.   

We are now ready to present our main theorem, which provides sufficient conditions for the three mixing properties of random Laguerre tessellations. 

\begin{theorem}\label{thm:Main}
    Let $\ppp$ be a stationary marked point process on $\rd \times \R$ with a stationary mark distribution $\markdist$. Assume that there exists $\delta > 0$ such that the typical mark $M \sim \markdist$ satisfies 
\begin{equation} \label{as:momentM_}
\e M_-^\frac{d+\delta}{2}< \infty,
\end{equation} 
where $M_-$ is the negative part of $M$. Then the following implications hold.
\begin{itemize}
    \item[(a)] If $\ppp$ is ergodic, then $L(\ppp)$ is \as a tessellation of $\rd$ and it is also ergodic.
    \item[(b)] If $\ppp$ is mixing, then $L(\ppp)$ is mixing.
    \item[(c)] If $\ppp$ is $\alpha$-mixing, then $L(\ppp)$ is $\alpha$-mixing.
\end{itemize}
\end{theorem}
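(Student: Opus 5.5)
We view $L(\ppp)$ as a factor of $\ppp$: $L(\ppp)=\mathcal{L}(\ppp)$ for the Laguerre mapping $\mathcal{L}\colon\varphi\mapsto L(\varphi)$. This map is measurable by Theorem \ref{thm:MeasurabilityOfL} and shift-equivariant, $\mathcal{L}(T_z\varphi)=\mathcal{T}_z\mathcal{L}(\varphi)$, as is clear from the definition of the power distance.

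The first step is well-definedness. We restrict to a shift-invariant measurable set $\mathcal{M}$ of tempered configurations, roughly those with $\#\{(x,m)\in\varphi: \|x\|\le r,\ m<-c\,r^{2}\}$ finite for every $c>0$, and in the spirit of $\Msettemp$ also with growth controlled by $\delta$. On $\mathcal{M}$ the diagram $L(\varphi)$ is a tessellation. For this we show three things:
\begin{itemize}
\item[(i)] every $z$ lies in at least one cell, since the infimum of $\pd(z,\cdot)$ is attained by temperedness;
\item[(ii)] only finitely many generators can be relevant in any compact window $K$, because a generator with nucleus far away must carry a very negative mark to win there;
\item[(iii)] the cells are bounded, using the ergodic theorem applied to $\ppp'$ so that points are present in every direction.
\end{itemize}
To show $\p(\ppp\in\mathcal{M})=1$, we apply Campbell's formula and Borel--Cantelli over dyadic shells. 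The expected number of points in $B(0,2^{k+1})$ with $M_->c\,4^{k}$ is at most $\gamma\,2^{(k+1)d}\,\p(M_->c4^k)$, and this is summable precisely under $\e M_-^{(d+\delta)/2}<\infty$ via Markov's inequality. Ergodicity, or the ergodic theorem, is needed only for the no-empty-direction property (iii). Stationarity of $L(\ppp)$ then follows from equivariance.

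The second step transfers mixing. For ergodicity and mixing this is immediate from the factor structure: for $A,B\in\borel(\mathcal{F}(\FdAp))$ we have $\p(L(\ppp)\in A,\mathcal{T}_zL(\ppp)\in B)=\p(\ppp\in\mathcal{L}^{-1}A,\ T_z\ppp\in\mathcal{L}^{-1}B)$, so (a) and (b) follow directly from Definition \ref{def:MixingMPP}(i),(ii), with $\mathcal{L}^{-1}A\in\mathcal{N}$ by measurability.

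The third step, (c), is the main obstacle, since $\alpha$-mixing is not preserved by arbitrary factors and needs locality. We use a stabilization radius. For $a>0$ and $\varphi\in\mathcal{M}$ let $R_a(\varphi)$ be the smallest $R$ such that the cells meeting $aB^d$, together with the information of which cells meet $aB^d$, are determined by $\varphi\cap(RB^d\times\R)$. Symmetrically, let $R'_b(\varphi)$ be the smallest $r$ such that the cells meeting $\rd\setminus bB^d$ are determined by $\varphi$ outside $rB^d\times\R$. The event $\{L(\ppp)\in A\}$ with $A\in\tailFnula{a}$ then agrees, up to the set $\{R_a>R\}$, with an event in $\sigma(\ppp\cap(RB^d\times\R))$, whose window has finite volume $c=\lambda^d(RB^d)$. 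The event $\{L(\ppp)\in B\}$ with $B\in\tailFnula{-b}$ agrees, up to $\{R'_b>b/2\}$, with an event measurable with respect to $\ppp$ outside $(b/2)B^d$. This gives
\[
\alpha\bigl(\sigma_{L(\ppp)}(\tailFnula{a}),\sigma_{L(\ppp)}(\tailFnula{-b})\bigr)\le \alpha_\ppp(c,\infty;b/2-R)+2\p(R_a>R)+2\p(R'_b>b/2).
\]
We first let $b\to\infty$ and then $R\to\infty$.

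The delicate part is proving $\p(R'_b>b/2)\to0$ uniformly enough, \ie that distant cells are not influenced by the points near the origin. By stationarity and temperedness, a generator near the origin can affect a cell meeting $\rd\setminus bB^d$ only if its mark is below roughly $-b^2/4$ relative to the local configuration there. Via the moment condition this happens with probability tending to $0$, and the union bound over the finitely many points in $RB^d$ closes the argument. Measurability of these localized events again relies on Theorem \ref{thm:MeasurabilityOfL} applied to restricted configurations, using the semi-algebra generators of $\tailFnula{\pm a}$ to pass to the generated $\sigma$-algebras by a standard approximation argument.
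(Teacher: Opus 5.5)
Your parts (a) and (b) are fine. The factor argument is the paper's. Your Campbell/Borel--Cantelli proof of (R1) is a valid and more elementary substitute for the paper's ergodic-theorem route to $\p(\hat{\ppp}\in\Msettemp)=1$; it even works under $\e M_-^{d/2}<\infty$, and (R2) needs only stationarity. The architecture of (c) also matches the paper, and $\p(R_a>R)\to 0$ for fixed $a$ follows from $R_a<\infty$ a.s.

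\textbf{The gap} is the step you flag yourself, $\p(R'_b>b/2)\to 0$; the argument you sketch for it does not work. Campbell's formula only gives that, with high probability, no point of $(b/2)B^d$ has mark below $-cb^2$: the expected number of such points is $\gamma\lambda^d(B^d)(b/2)^d\,\p(M_->cb^2)\to 0$. This does not stop a point of $(b/2)B^d$ with an ordinary mark from owning or neighbouring a cell that meets $\rd\setminus bB^d$. That happens whenever there is a large region without low-mark generators beyond it.
\begin{itemize}
\item Already for Voronoi tessellations (all marks $0$, moment condition void) the claim is a non-trivial statement about vacancies, and the moment condition says nothing about vacancies.
\item Such vacancies must be excluded simultaneously for all far points of these cells. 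That is an unbounded set, at a scale growing with $b$. Stationarity controls a vacancy at one fixed location only. Since $b$ and the radius $b/2$ move together, you cannot argue by continuity of measure as you can for $R_a$.
\item The ``union bound over the finitely many points in $RB^d$'' is misplaced. The outer localization removes all points of $(b/2)B^d$, of order $b^d$ many.
\item You also need, and do not address, that cells meeting $\rd\setminus bB^d$ do not reach back towards $(b/2)B^d$.
\end{itemize}

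\textbf{The missing idea} is the paper's reduction of large-cell events to finitely many rescaled vacancy events.
\begin{itemize}
\item Suppose a cell with nucleus in $rB^d$ contains a point $z\notin K_2rB^d$ lying in the cone $A_i$. Then any generator in $A_i\cap B(0,3r,\alpha(l)r)$ with mark at most $r^2$ would beat the nucleus at $z$, using $\dotp{y}{z}\ge\frac12\norm{y}\norm{z}$ and the lower bound \eqref{Bound-anypoints} on $m_L$. So this fixed cone segment must be vacant, however far away $z$ is (Lemma \ref{LBlemma:BoundErTilde}).
\item Likewise, take a cell meeting $rB^d$ but not contained in $(K_3+1)rB^d$, whose nucleus is not far away (Lemma \ref{LBlemma:BoundErHat}). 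It forces a vacant ball $\beta_l rB^d$ or $\beta_l rB_j$ from a fixed finite cover (Lemma \ref{lemma:boundEr}).
\end{itemize}
Each of these finitely many scaled vacancy probabilities tends to $0$ by stationarity (Lemma \ref{lemma:EmptySetLimit}). Lemma \ref{lemma:NeighborsB} then converts this into exactly the scale-uniform outer localization your bound needs.

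An alternative might be an almost sure, all-scales no-vacancy statement from the ergodic theorem, which is available since $\alpha$-mixing implies ergodicity. In principle it could play the same role, but it has to be proved; it does not follow from the moment condition.
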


The proofs of parts (a) and (b) are given in Section \ref{subsec:Proofab}, while the proof of part (c) can be found in Section \ref{subsec:Proofc}. As a possible future direction, we expect that a similar approach could be used to deal with the $\beta$-mixing for random Laguerre tessellations. 

Let us conclude by stating that our proofs do not, in principle, provide any information about the rate of decay of the mixing coefficients, even if such information were available for the random set $\ppp$ of generators. This is due to the employ of the set of tempered configurations. Further study of the properties of this set would therefore be required to be able to say anything about the rate of mixing.

\section{Laguerre Formalism and Tempered Configurations} \label{sec:LaguerreAndTempered}

The goal of this section is to formalize the definition of a random Laguerre tessellation and find sufficient conditions for a marked point process $\ppp$ under which $L(\ppp)$ is well-defined. To this end, we first present the concept of tempered configurations in Section \ref{subsec:temperedness}, while Section \ref{subsec:RandomLaguerreTess} deals with the Laguerre formalism, following closely \cite{diz:Lautensack, ar:LZ08}.

Let us leave a small note on the difference between our definition \eqref{def:LaguerreDiagram} of the Laguerre diagram and the (standard) setting of \cite{ar:LZ08}, in which the Laguerre diagram is defined as the collection of all non-empty Laguerre cells, rather than only those with non-empty interiors. If $\varphi$ is in general position (see \cite[Section 3]{ar:LZ08} for the definition), then every Laguerre cell is either empty or has dimension $d$, and both definitions coincide. However, the assumption of general position is not needed in the remainder of our paper. We therefore decided to omit this assumption and rather solve the problem of lower-dimensional Laguerre cells within the definition of the Laguerre diagram.

\subsection{Tempered Configurations} \label{subsec:temperedness}
To be able to handle unbounded weights, we consider the definition of tempered configurations from \cite[Section 2.2]{ar:RZ20}. Let $ \delta > 0$ and denote
\begin{align*}
    \Msetl \coloneqq \{ \varphi \in \mathsf{N}_{s,m}(\rd \times \R) :  \sum_{{(x,m)} \in \varphi\cap (kB^d \times \R)} (1 + \abs{m}^{d+\delta}) \leq l \cdot k^d \text{ for all }k \in \n \},
\end{align*}
where $\mathsf{N}_{s,m}(\rd \times \R)$ is the set of all simple marked measures from $\mathsf{N}_{m}(\rd \times \R)$. The set of tempered configurations with parameter $\delta$ is the set $\Msettemp \coloneqq \bigcup_{l \in \n} \Msetl$. As noted before, we can treat elements of $\Msettemp$ both as locally finite counting measures and as locally finite at most countable subsets of $\rd \times \R$ through the notion of the support of the measure.

Define for any $\varphi \in \mathsf{N}_{s,m}(\rd \times \R)$ the following transformation 
\begin{equation} \label{def:hattrans}
    \hat{\varphi}\coloneqq \sum_{(x,m) \in \varphi} \delta_{(x,\sqrt{m_-})},
\end{equation}
where $m_-$ is the negative part of $m$. For $l \in \n$ and $\delta > 0$ denote
\begin{equation}\label{def:hatMsetl}
    \hatMsetl\coloneqq \{\varphi \in \mathsf{N}_{s,m}(\rd \times \R): \hat{\varphi} \in \Msetl\}.
\end{equation}
The set of tempered configurations allows us to control the norm of the marks. However, for the construction of the Laguerre tessellation, only negative marks with large norm need to be controlled. This motivates the transformation \eqref{def:hattrans}, which allows us to work with tempered configurations without imposing any moment assumptions on the positive part of the typical mark, which is in principle not necessary. 

Before we move to the Laguerre formalism, we present the following basic observations. 
\begin{lemma} \label{lemma:PropertiesTempered}
   Let $l \in \n$ and $\delta > 0$. Then for any $\varphi\in \hatMsetl$ the following implication holds
  \begin{equation}\label{Bound-anypoints}
   (x,m) \in \varphi \implies m > - l ^{\frac{2}{d+\delta}}(\norm{x}+1)^\frac{2d}{d+\delta}.
\end{equation}
Furthermore, denote $k(l) \coloneqq \frac{1}{2} \cdot 2^{\frac{d+\delta}{\delta}} \cdot l^{\frac{1}{\delta}} $, then for all $k \geq k(l)$ we have that
 \begin{equation}\label{Bound-farpoints}
    (x,m) \in \varphi \cap (((2k+1)B^d)^c\times\R) \implies m \geq -(\norm{x}-k)^2.
\end{equation}
\end{lemma}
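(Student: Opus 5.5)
The plan is to read off a pointwise bound on each single mark from the defining inequality of $\hatMsetl$, and then to compare that bound with $(\norm{x}-k)^2$ for far-away points.

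\textbf{Step 1: the bound \eqref{Bound-anypoints}.} By \eqref{def:hattrans} and \eqref{def:hatMsetl}, $\varphi \in \hatMsetl$ means that for every $k\in\n$,
\begin{equation*}
\sum_{(x,m)\in\varphi\cap(kB^d\times\R)} \bigl(1+ m_-^{\frac{d+\delta}{2}}\bigr) \le l\, k^d .
\end{equation*}
Fix $(x,m)\in\varphi$. If $m\ge 0$, \eqref{Bound-anypoints} is trivial, since its right-hand side is negative. Otherwise take $k\coloneqq\max\{1,\lceil\norm{x}\rceil\}$. Then $x\in kB^d$ and $k\le \norm{x}+1$.

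Keep only the summand belonging to $(x,m)$; all other summands are nonnegative. The $1$ in that summand gives a strict inequality:
\begin{equation*}
m_-^{\frac{d+\delta}{2}} < l\,k^d \le l(\norm{x}+1)^d .
\end{equation*}
Raising both sides to the power $2/(d+\delta)$ gives $m_- < l^{\frac{2}{d+\delta}}(\norm{x}+1)^{\frac{2d}{d+\delta}}$. Since $m=-m_-$ here, this is exactly \eqref{Bound-anypoints}.

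\textbf{Step 2: the bound \eqref{Bound-farpoints}.} Let $k\ge k(l)$ and let $(x,m)\in\varphi$ with $r\coloneqq\norm{x}>2k+1$. Again only $m<0$ needs treatment. By Step 1 it suffices to show
\begin{equation*}
l^{\frac{2}{d+\delta}}(r+1)^{\frac{2d}{d+\delta}} \le (r-k)^2 .
\end{equation*}
The elementary observation is that $r\ge 2k+1$ is equivalent to $r-k\ge (r+1)/2$. So it is enough to prove $l^{\frac{2}{d+\delta}}(r+1)^{\frac{2d}{d+\delta}}\le (r+1)^2/4$.

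Dividing by $(r+1)^{\frac{2d}{d+\delta}}$, this becomes $(r+1)^{\frac{2\delta}{d+\delta}}\ge 4\, l^{\frac{2}{d+\delta}}$. Taking the power $(d+\delta)/(2\delta)$ turns it into
\begin{equation*}
r+1\ge 2^{\frac{d+\delta}{\delta}}\, l^{\frac1\delta} = 2k(l).
\end{equation*}
This holds because $r+1>2k+2>2k\ge 2k(l)$.

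The only step needing care is finding the right intermediate comparison $r-k\ge (r+1)/2$. This is where the constant $2^{(d+\delta)/\delta}$ in the definition of $k(l)$ comes from. The rest is exponent bookkeeping, and I do not expect a genuine obstacle.
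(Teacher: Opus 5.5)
Your proof is correct, and it differs from the paper's in how \eqref{Bound-farpoints} is obtained.

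For \eqref{Bound-anypoints}, your Step 1 is what the paper means when it says the bound follows from the definition of $\Msetl$; you just write out the single-summand estimate.

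For \eqref{Bound-farpoints}, the paper does not compute anything. It quotes \cite[Lemmas 1 and 2]{ar:RZ20} applied to $\hat\varphi\in\Msetl$, in the geometric form that for $k\ge k(l)$ every ball $B(x,\sqrt{m_-})$ with $\norm{x}>2k+1$ misses $\interior(B(0,k))$. That condition is equivalent to $\sqrt{m_-}\le\norm{x}-k$.

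You instead derive \eqref{Bound-farpoints} as a corollary of \eqref{Bound-anypoints}. The key comparison is $\norm{x}-k\ge(\norm{x}+1)/2$, which holds exactly when $\norm{x}\ge 2k+1$. The remaining exponent bookkeeping reproduces the threshold $2k(l)=2^{\frac{d+\delta}{\delta}}l^{\frac{1}{\delta}}$ exactly.

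The paper's citation is shorter and keeps the statement in the ball picture (radius $\sqrt{m_-}$) used in the Gibbs literature. Your argument is self-contained and explains where $k(l)$ comes from. It also gives the slightly stronger strict inequality $m>-(\norm{x}-k)^2$ and never uses integrality of $k$.
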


\begin{proof}
 It follows from \cite[Lemmas 1 and 2]{ar:RZ20} that for any $ \Tilde{\varphi} \in \Msetl$ the following implication holds for all $k \geq k(l)$,
\begin{equation*} 
    {(x,m)} \in \Tilde{\varphi}\cap (((2k+1)B^d)^c \times \R) \implies B(x,m) \cap \interior(B(0, k)) = \emptyset.
\end{equation*}
This implies property \eqref{Bound-farpoints}. The property \eqref{Bound-anypoints} follows from the definition of the set $\Msetl$. 
\end{proof}

The property \eqref{Bound-farpoints} is useful for points far from the origin, while \eqref{Bound-anypoints} can be used for the rest. 

\subsection{Random Laguerre Tessellation} \label{subsec:RandomLaguerreTess}

Recall the definition \eqref{def:LaguerreDiagram} of the Laguerre diagram $L(\varphi)$. Let $\varphi \subseteq\rd \times \R$ be an at most countable set of weighted points and denote by $\varphi' \coloneqq \{x \in \rd: \exists m \in \R \text{ such that } (x,m) \in \varphi \}$ the set of all nuclei. We say that $\varphi$ satisfies the regularity conditions if 
\begin{itemize}
    \item[(R1)] for all $z \in \rd$ and all $t \in \R$ we have $\abs{\{(x,m) \in \varphi: \pd(z,(x,m)) \leq t\}} < \infty$,
    \item[(R2)] $\text{conv}(\varphi') = \rd$.
\end{itemize}

As was stated in \cite[Theorem 3.1]{ar:LZ08},  if $\varphi \subseteq\rd \times (-\infty,0]$ satisfies the regularity conditions, then $L(\varphi)$ is a well-defined tessellation of $\rd$. The same arguments hold also for the case with general real-valued weights. 

\begin{prop} \label{prop:Tessellation}
    Assume that $\varphi \subseteq\rd \times \R$ is a countable set that satisfies the regularity conditions. Then the Laguerre diagram $L(\varphi)$ is a tessellation of $\rd$ (in terms of Definition \ref{def:tessellation}). 
\end{prop}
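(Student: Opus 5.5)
The plan is to follow the argument of \cite[Theorem 3.1]{ar:LZ08} and check that the sign of the weights never enters. Each cell $L(\bfx,\varphi)$ is an intersection of the closed half-spaces
\[
H(\bfx,\bfy)\coloneqq\{z: 2\dotp{z}{y-x}\le \norm{y}^2-\norm{x}^2+n-m\}, \qquad \bfx=(x,m),\ \bfy=(y,n)\in\varphi,
\]
because $\pd(z,\bfx)\le\pd(z,\bfy)$ is linear in $z$ once $\norm{z}^2$ cancels. (If $x=y$ for $\bfx\neq\bfy$, this set is either all of $\rd$ or empty.) Hence every cell is closed and convex, and arbitrary real weights cause no difficulty. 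The proof then verifies four things in turn: covering, compactness, disjoint interiors, and local finiteness.

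\emph{Covering.} Fix $z\in\rd$. Pick any $\bfx_0\in\varphi$ and set $t=\pd(z,\bfx_0)$. By (R1) only finitely many generators satisfy $\pd(z,\cdot)\le t$, so the infimum of $\pd(z,\cdot)$ over $\varphi$ is attained. Thus $z$ lies in some cell.

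Cells with empty interior have been discarded in \eqref{def:LaguerreDiagram}, so we must show they are not needed for the union. Fix a compact set $K$. Using (R1) together with the uniform bound $\pd(z,\bfy)\ge \pd(z_0,\bfy)-C_K\norm{y-z_0}$-type estimates on $K$, only finitely many generators can be nearest for some $z\in K$. So $K$ is covered by finitely many closed convex cells. By a Baire-category argument (a finite union of closed sets with empty interior has empty interior), the cells with non-empty interior already cover a dense subset of $K$. Since the union of finitely many closed sets is closed, they cover all of $K$.

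The finiteness claim needs care, and I would prove it as follows. For $z\in K$ and $\bfy=(y,n)$, write $\pd(z,\bfy)=\pd(z_0,\bfy)+\norm{z-z_0}^2+2\dotp{z-z_0}{z_0-y}$. If infinitely many $\bfy$ were nearest for points of $K$, then (R1) at $z_0$ would force $\pd(z_0,\bfy)\to\infty$ along them. The cross term grows at most linearly in $\norm{y}$, whereas $\pd(z_0,\bfy)$ also contains $\norm{y-z_0}^2$. One therefore has to separate the cases where $\norm{y}$ or $n$ is large and compare with a fixed competitor $\bfx_0$. I expect this step to be delicate: we only know $n\ge -\norm{y-z_0}^2+\pd(z_0,\bfy)$, so the argument has to use (R1) simultaneously at several points, e.g.\ at the vertices of a simplex containing $K$, together with convexity of $z\mapsto \pd(z,\bfy)-\pd(z,\bfx_0)$. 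That function is affine, so its minimum over the simplex is attained at a vertex, and (R1) at the finitely many vertices gives the finiteness. This affine-plus-vertices trick also yields local finiteness of the diagram directly.

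\emph{Compactness via (R2).} Suppose a cell $L(\bfx,\varphi)$ contains a ray $\{z+tu: t\ge0\}$. By (R2), $x$ is not an extreme direction of the hull, so there exists $\bfy\in\varphi$ with $\dotp{y-x}{u}>0$; otherwise all nuclei would lie in the half-space $\{w:\dotp{w-x}{u}\le0\}$, contradicting $\operatorname{conv}(\varphi')=\rd$. Along the ray, $\pd(\cdot,\bfy)-\pd(\cdot,\bfx)$ is affine with slope $-2\dotp{y-x}{u}<0$, so it eventually becomes negative. This contradicts the ray lying in the cell. A closed convex set containing no ray is bounded, so each cell is compact.

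\emph{Disjoint interiors.} For $\bfx\ne\bfy$ with $x\ne y$, the set $L(\bfx,\varphi)\cap L(\bfy,\varphi)$ lies in the hyperplane $\{\pd(\cdot,\bfx)=\pd(\cdot,\bfy)\}$, which has empty interior. If $x=y$, then simplicity means we treat the points as distinct pairs with different weights, so one of the two cells is empty.

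Finally, the set of cells is countable and infinite, since by compactness finitely many cells cannot cover $\rd$. It can therefore be indexed by $\n$, and together with the steps above this shows that $L(\varphi)$ satisfies Definition \ref{def:tessellation}. The main obstacle is the local finiteness step, and there I would first try the affine-on-simplex argument described above.
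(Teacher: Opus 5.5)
Your proposal is correct and follows the same route as the paper, which transfers the Lautensack--Zuyev argument to arbitrary real weights: cells as intersections of closed half-spaces, covering and local finiteness from (R1), boundedness from (R2), and disjoint interiors via the separating hyperplanes. The affine-function-on-a-simplex argument you end with settles local finiteness completely, so the earlier hedging about a delicate estimate can be dropped; your nowhere-density argument that the discarded lower-dimensional cells are not needed to cover $\rd$ is a worthwhile addition, since the paper's definition \eqref{def:LaguerreDiagram} requires it but the paper leaves it implicit (take $K$ to be a closed ball there, so that $\interior K$ is dense in $K$).
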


The proof of this proposition is a retelling of the proofs of \cite[Propositions 2.2.2, 2.2.4 and 2.2.5 and Lemma 2.2.3]{diz:Lautensack}, the only change being the different parametrization for the power distance.  

To define a random Laguerre tessellation, we first comment on the measurability of the Laguerre mapping $L: \varphi \mapsto L(\varphi)$. To the best of our knowledge, such claim, albeit indisputable in its nature, has not been formally proved previously for Laguerre or even Voronoi case. The corresponding claim for hyperplane tessellation was proved in \cite[Section 2.5]{bo:HS24}. 

Recall the notation $\mathsf{N}_{s,m}(\rd \times \R)$ for the space of simple marked measures from $\mathsf{N}(\rd \times \R)$ and let $\mathcal{N}_{s,m}(\rd \times \R)$ denote the standard trace $\sigma$-algebra on this space. 
\begin{theorem}\label{thm:MeasurabilityOfL}
The mapping $L: \mathsf{N}_{s,m}(\rd \times \R) \rightarrow \mathcal{F}(\FdAp)$ defined in \eqref{def:LaguerreDiagram}, which assigns a weighted point configuration to its Laguerre diagram, is measurable w.r.t.\,\,$\mathcal{N}_{s,m}(\rd \times \R)$ and $\borel(\mathcal{F}(\FdAp))$. 
\end{theorem}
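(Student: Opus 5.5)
The proof reduces measurability of $L$ to measurability of the events $\{\varphi: L(\varphi)\cap \mathcal{C}\neq\emptyset\}$ for compact $\mathcal{C}\subseteq \FdAp$. The Borel $\sigma$-algebra of the Fell topology on $\mathcal{F}(\FdAp)$ is generated by the sets $\mathcal{F}_{\mathcal{C}}$, $\mathcal{C}\in\mathcal{C}(\FdAp)$, since $\FdAp$ is locally compact with a countable base; see \cite[Theorem 12.2.1]{bo:SW08} and its companion results. It therefore suffices to show that for every compact $\mathcal{C}\subseteq\FdAp$ the set $\{\varphi\in \mathsf{N}_{s,m}(\rd\times\R): L(\varphi)\cap\mathcal{C}\neq\emptyset\}$ belongs to $\mathcal{N}_{s,m}(\rd\times\R)$.

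The first step is to write $\varphi$ as a measurable enumeration of its points, $\varphi=\sum_{i}\delta_{(\xi_i(\varphi),\mu_i(\varphi))}$. Here $N(\varphi)=\varphi(\rd\times\R)\in\n_0\cup\{\infty\}$ is measurable, and the maps $\varphi\mapsto(\xi_i,\mu_i)$ are measurable on $\{N>i\}$, for instance by ordering the nuclei lexicographically by norm and then by coordinates. Such an enumeration is standard, see \cite{bo:LP17} on measurable enumerations of proper point processes, and simplicity of the ground process makes the ordering unambiguous. With this enumeration the event becomes
\[
\bigcup_{i}\Bigl(\{N>i\}\cap\bigl\{\interior L(\bfx_i,\varphi)\neq\emptyset,\ L(\bfx_i,\varphi)\in\mathcal{C}\bigr\}\Bigr),\qquad \bfx_i=(\xi_i,\mu_i),
\]
so we must show two things: that $\varphi\mapsto L(\bfx_i,\varphi)\in\mathcal{F}(\rd)$ is measurable, and that $\{\interior L(\bfx_i,\varphi)\neq\emptyset\}$ is measurable. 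Note that $L(\bfx_i,\varphi)$ may be empty or unbounded for general $\varphi$, so we regard it as an element of $\mathcal{F}(\rd)$ and intersect with the Borel set $\FdAp$ at the end.

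For the cell map, write $L(\bfx_i,\varphi)=\bigcap_{j}H(\bfx_i,\bfx_j)$, where $H(\bfx,\bfy)=\{z:\pd(z,\bfx)\le\pd(z,\bfy)\}$ is a closed half-space, or all of $\rd$ or empty, and depends measurably, indeed upper semicontinuously, on $(\bfx,\bfy)$. A countable intersection of random closed sets is a random closed set \cite[Theorem 2.1.1 and related results]{bo:SW08}, so $\varphi\mapsto L(\bfx_i,\varphi)$ is measurable. For the interior condition, observe that $L(\bfx_i,\varphi)$ is closed and convex, so it has non-empty interior iff it contains a ball $B(q,1/n)$ with $q\in\mathbb{Q}^d$ and $n\in\n$. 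The condition $B(q,1/n)\subseteq L(\bfx_i,\varphi)$ is equivalent to $\sup_{z\in B(q,1/n)}(\pd(z,\bfx_i)-\pd(z,\bfx_j))\le 0$ for all $j$. Each of these suprema is an explicit continuous function of $(\bfx_i,\bfx_j)$, because the difference is affine in $z$. This gives a countable union of countable intersections of measurable sets.

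The main obstacle is the bookkeeping of measurability in the first step. One must produce a measurable enumeration valid on all of $\mathsf{N}_{s,m}(\rd\times\R)$, including configurations with finitely many points, and make sure that composing Effros/Fell-measurable maps with the enumeration is legitimate. If the enumeration causes trouble, the fallback is to approximate by the restrictions $\varphi\cap(kB^d\times\R)$. Each such restriction has finitely many points and yields finitely many half-space intersections that are continuous in the points. We would then pass to the limit $k\to\infty$ using that $\mathcal{F}_{\mathcal{C}}$ is determined by countably many open/compact conditions, and that the cells $L(\bfx,\varphi\cap(kB^d\times\R))$ decrease to $L(\bfx,\varphi)$.
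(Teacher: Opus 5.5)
Your proposal is correct and follows essentially the same route as the paper: a measurable enumeration of the atoms (Last--Penrose), measurability of each cell as a countable intersection of power half-spaces, a measurable test for non-empty interior, and assembly over the atoms. For the cell map the paper argues by hand, via the finite intersections $h_n$ and a decreasing-compacts argument, which is the countable-intersection fact you cite. Beyond that, the differences are local simplifications: you detect non-empty interior through rational balls $B(q,1/n)$ and explicit affine suprema rather than the measurability of $F\mapsto\closure(F^c)$, and you write $\{L(\varphi)\cap\mathcal{C}\neq\emptyset\}$ directly as a countable union of events instead of building the auxiliary singleton-valued map $\kappa$ and the finite unions $L_1^n$.
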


The proof of Theorem \ref{thm:MeasurabilityOfL} can be found in Section \ref{subsec:MeasurabilityProof}. We can now formally define the random Laguerre tessellation as a special model of random tessellation. 

\begin{definition}
    Let $\ppp$ be a simple marked point process on $\rd \times \R$ such that $\p(L(\ppp) \in \mathbb{M}_d)=1$. Then it is called an admissible random Laguerre generator and $L(\ppp)$ is called the random Laguerre tessellation generated by $\ppp$.
\end{definition}

In order to check the admissibility of a random set of generators, we verify that it \as satisfies the regularity conditions. As it turns out, it is enough to assume stationarity and check that we can restrict ourselves to the set of tempered configurations. Recall the notation \eqref{def:hattrans}.

\begin{lemma} \label{lemma:ExistenceLagTess}
    Let $\ppp$ be a stationary marked point process such that $\p(\hat{\ppp} \in \Msettemp)=1$, then $\ppp$ is an admissible random Laguerre generator. 
\end{lemma}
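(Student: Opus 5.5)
By Proposition \ref{prop:Tessellation}, it suffices to show that almost surely $\ppp$ satisfies the regularity conditions (R1) and (R2). Admissibility $\p(L(\ppp)\in\mathbb{M}_d)=1$ then follows directly. Measurability of the event is guaranteed by Theorem \ref{thm:MeasurabilityOfL} and by $\mathbb{M}_d$ being Borel; alternatively, we only need the event $\{L(\ppp)\in\mathbb{M}_d\}$ to contain a set of probability one, if we work with completions. So the plan is to verify (R1) deterministically on $\{\hat\ppp\in\Msettemp\}$, and to verify (R2) almost surely using stationarity.

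\emph{Step 1: (R1) on tempered configurations.} Fix $\varphi$ with $\hat\varphi\in\hatMsetl$ for some $l$, together with $z\in\rd$ and $t\in\R$. We must show that only finitely many $(x,m)\in\varphi$ satisfy $\norm{x-z}^2+m\le t$. Split the points into two groups.
\begin{itemize}
\item[(i)] Points with $\norm{x}\le 2k+1$. These are finitely many, because $\varphi'$ is locally finite.
\item[(ii)] Points with $\norm{x}>2k+1$, where $k\ge k(l)$ is chosen large. By \eqref{Bound-farpoints}, such points satisfy $m\ge -(\norm{x}-k)^2$, so
\[
\norm{x-z}^2+m\ \ge\ (\norm{x}-\norm{z})^2-(\norm{x}-k)^2=(k-\norm{z})(2\norm{x}-k-\norm{z}).
\]
Choose $k>\max(k(l),\norm{z}+1)$. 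Then this lower bound tends to $\infty$ as $\norm{x}\to\infty$. Hence only points with $\norm{x}$ bounded (the bound depending on $t$) can contribute, and there are finitely many of these by local finiteness.
\end{itemize}
Combining (i) and (ii) gives (R1) for every $\varphi$ with $\hat\varphi\in\Msettemp$, hence almost surely for $\ppp$.

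\emph{Step 2: (R2) from stationarity.} Show that $\operatorname{conv}(\ppp')=\rd$ almost surely. The standard argument proceeds as follows.
\begin{itemize}
\item A stationary non-degenerate point process with positive intensity $\gamma$ has infinitely many points almost surely. More precisely, the event $\{\ppp'\neq\emptyset\}$ is shift-invariant; we use the non-degeneracy assumption, or restrict to it, as the paper assumes.
\item Cover $\rd$ by finitely many convex cones $C_1,\dots,C_N$ with apex $0$ and small opening angle, such that any set meeting every translate-family of cones appropriately has full convex hull. A convenient choice is the following: if for every $y\in\rd$ and every $j$ the shifted cone $y+C_j$ contains a point of $\ppp'$, then every $y$ lies in the convex hull. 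It suffices to check countably many $y$ (rational ones) with slightly narrower cones.
\item For a fixed cone $C$, the event $\{\ppp'\cap(y+C)=\emptyset\}$ has probability zero. Indeed, $y+C$ contains translates of arbitrarily large balls, and by stationarity and the mass-transport or ergodic-type argument used in \cite[Lemma 2.2.3]{diz:Lautensack}, a stationary point process that is nonempty almost surely has infinitely many points in every cone almost surely.
\end{itemize}

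The main obstacle I expect is Step 2, the cone argument without ergodicity. I would first try to cite the corresponding statement in \cite{ar:LZ08}/\cite{diz:Lautensack}, which uses only stationarity. Failing that, I would prove it directly. Let $p=\p(\ppp'\cap C=\emptyset)$. The events $\{\ppp'\cap(-nv+C)=\emptyset\}$, for $v$ in the interior direction of $C$, are decreasing in $n$ and all have probability $p$ by stationarity. Their intersection is $\{\ppp'\cap \bigcup_n(-nv+C)=\emptyset\}=\{\ppp'=\emptyset\}$, which has probability zero. Since the events are decreasing with constant probability, $p=0$.
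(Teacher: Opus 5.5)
Your proof is correct and has the same structure as the paper's. You reduce to (R1)--(R2) via Proposition~\ref{prop:Tessellation}, verify (R1) deterministically on tempered configurations, and get (R2) from stationarity together with $\p(\ppp'=\emptyset)=0$. Like the paper, you take the latter from the standing non-degeneracy assumption, and it really is needed, since the empty configuration is tempered.

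\textbf{(R1).} The paper splits the points by the sign of the mark. It handles $m\ge 0$ via the ball $B(z,\sqrt t)$, and handles $m<0$ via \eqref{Bound-farpoints} with $k\ge\max\{k(l),2\norm{z},\sqrt{t+1}\}$, which gives the uniform bound $\pd(z,(x,m))\ge k^2>t$ outside $B(0,2k+1)$. You instead apply \eqref{Bound-farpoints} to all far points regardless of sign and use the factorization $(k-\norm{z})(2\norm{x}-k-\norm{z})$. This is slightly cleaner. One notational slip: ``$\hat\varphi\in\hatMsetl$'' should read $\varphi\in\hatMsetl$.

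\textbf{(R2).} This is the real difference. The paper simply cites \cite[Theorems 2.4.3 and 2.4.4]{bo:SW08} for $\operatorname{conv}(\ppp')=\rd$, whereas your final paragraph proves it from scratch. Your argument is complete once you make the following explicit:
\begin{itemize}
\item[(i)] Take the $C_j$ to be closed convex cones with non-empty interior and angular diameter below $\pi/2$. Then if $y\notin\operatorname{conv}(\ppp')$, a separating direction $u$ lies in some $C_j$, which forces $(y+C_j)\cap\ppp'=\emptyset$.
\item[(ii)] Use $v+C\subseteq C$ and $\bigcup_n(-nv+C)=\rd$ for $v\in\interior(C)$ in the continuity-from-above step.
\item[(iii)] Take the countable union over $y\in\mathbb{Q}^d$. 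A convex set containing $\mathbb{Q}^d$ is already $\rd$, so the ``slightly narrower cones'' are unnecessary.
\end{itemize}
The vague mass-transport bullet can simply be deleted.

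The citation is shorter and covers arbitrary stationary random closed sets. Your version is self-contained and shows explicitly that only stationarity of $\ppp'$ and almost sure non-emptiness are used, with no ergodicity.
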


\begin{proof}
    According to Proposition \ref{prop:Tessellation}, it is enough to show that $\p(\ppp \text{ satisfies (R1), (R2)})=1$.  Since $\text{supp}(\ppp')$ is a stationary random closed set that is \as non-empty, it follows from \cite[Theorems 2.4.3 and 2.4.4]{bo:SW08} that $\text{conv}(\ppp')\coloneqq \text{conv}(\text{supp}(\ppp')) =  \rd$ \as

To show the first regularity condition, let $\varphi \in \mathsf{N}_{s,m}(\rd \times \R)$ be such that $\hat{\varphi} \in \Msettemp$ and denote by $l(\hat{\varphi})$ the smallest $l$ such that $\hat{\varphi} \in \Msetl$. Take $z \in \rd$ and $t \in \R$. We want to prove that $\abs{\{(x,m) \in \varphi: \pd(z,(x,m)) \leq t\}} < \infty.$ Clearly, it is enough to consider $t \geq 0$, since for any $t < 0$,
    \begin{equation*}
       \{(x,m) \in \varphi: \pd(z,(x,m)) \leq t\} \subseteq \{(x,m) \in \varphi: \pd(z,(x,m)) \leq 0\}.
    \end{equation*}
     
First take $(x,m) \in \varphi \cap (B(z,\sqrt{t})^c \times [0,\infty))$, then clearly $\pd(z,(x,m)) = \norm{x-z}^2+m > t$ and $\abs{\varphi \cap (B(z,\sqrt{t}) \times [0,\infty))} < \infty$ since $\varphi$ is locally finite. To deal with points with negative marks, we employ the tempered property \eqref{Bound-farpoints}. 

Fix $k \geq \max \{k(l(\hat{\varphi})),2\norm{z},\sqrt{t+1}\}$ and take $(x,m) \in \varphi \cap (B(0,2k+1)^c \times (-\infty,0))$, then 
\begin{align*}
    \pd(z,(x,m)) = \norm{x-z}^2+m &\overset{\eqref{Bound-farpoints}}{\geq} \norm{x-z}^2- (\norm{x}-k)^2 \\
    &=\norm{x-z}^2 - \norm{x}^2 + 2\norm{x}k - k^2 \\
    &\geq \norm{x}^2-2\norm{x}\norm{z} + \norm{z}^2 - \norm{x}^2 + 2\norm{x}k - k^2\\
    &\geq 2\norm{x}(k-\norm{z} ) - k^2 \geq 2(2k+1)(k-\norm{z} ) - k^2 \\
    &\geq 3k^2 - 4k\norm{z} \geq 3k^2-2k^2= k^2 > t. 
\end{align*}

Altogether, we get
\begin{equation*}
    \{(x,m) \in \varphi: \pd(z,(x,m)) \leq t\} \subseteq \varphi \cap ((B(z,\sqrt{t}) \times [0,\infty)) \cup (B(0,2k+1) \times (-\infty,0))),
\end{equation*}
or in other words that $\varphi$ satisfies (R1). Therefore, $\p(\hat{\ppp} \in \Msettemp)=1$ implies  $\p(\ppp\text{ satisfies (R1)}) = 1$ and the proof is complete. 
\end{proof}

\section{Proofs} \label{sec:Proofs}

This section consists of the proofs of our main results. In Section \ref{subsec:MeasurabilityProof}, we present the proof of the measurability claim from Theorem \ref{thm:MeasurabilityOfL}, Section \ref{subsec:Proofab} contains the proof of the ergodicity and mixing implications (a) and (b) from Theorem \ref{thm:Main}, while the $\alpha$-mixing implication (c) is proved in Section \ref{subsec:Proofc}. 

\subsection{Proof of Theorem \ref{thm:MeasurabilityOfL}} \label{subsec:MeasurabilityProof}

First, we recall a well-known result about the measurable enumeration of atoms of locally finite counting measures.

\begin{lemma} \label{lemma:MeasEnumeration}
    Let $k \in \n$. There exist measurable mappings $\pi_n : \mathsf{N}(\R^k)\rightarrow \R^k$, $n \in \n$, such that for any $\varphi \in \mathsf{N}(\R^k)$ we have $\varphi = \sum_{n = 1}^{\varphi(\R^k)}\delta_{\pi_n(\varphi)}$. In particular, the mappings $\Pi_n: \R^k \times \mathsf{N}(\R^k) \rightarrow (\R^k )^{n+1}$,
    \begin{equation}\label{def:TildePinmapping}
        \Pi_n(x,\varphi)\coloneqq(x,\pi_1(\varphi),\dots,\pi_n(\varphi)),
    \end{equation}
    are measurable for all $n \in \n$ w.r.t. $\borel(\R^k) \otimes \mathcal{N}(\R^k)$ and $\borel((\R^k)^{n+1})$.
\end{lemma}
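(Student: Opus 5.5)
We construct the enumeration explicitly: fix a measurable total order on $\R^k$ and list atoms in that order. Concretely, we use the lexicographic order, combined with a partition of $\R^k$ into the countably many shells $S_j \coloneqq \{x : j-1 \le \norm{x} < j\}$, $j\in\n$. Since $\varphi$ is locally finite, each shell carries finitely many atoms. We list the atoms shell by shell, and within a shell in lexicographic order, repeating each atom according to its multiplicity. This produces a sequence whose $n$-th term we call $\pi_n(\varphi)$ for $n\le \varphi(\R^k)$. For $n>\varphi(\R^k)$ we set $\pi_n(\varphi)$ equal to a fixed point, say $0$. With this convention the identity $\varphi=\sum_{n=1}^{\varphi(\R^k)}\delta_{\pi_n(\varphi)}$ holds by construction.

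The main step is measurability of each $\pi_n$. It suffices to show that $\{\varphi : \pi_n(\varphi)\in R\}$ is measurable for half-open boxes $R$, since these generate $\borel(\R^k)$. For the first coordinate we argue through rational thresholds. The first coordinate of $\pi_n(\varphi)$ is at most $q$, with $q\in\mathbb{Q}$, exactly when a counting condition on $\varphi$ holds. That condition involves the numbers $\varphi(\{x\in S_j : x_1\le q\})$ together with the counts $\varphi(S_i)$ for $i<j$, combined over the finitely many relevant shells via countable unions over $j$ and over the values of the counts. Every set appearing is of the form $\{\varphi : \varphi(B)=m\}$ with $B$ Borel, so it is measurable.

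The lower-order coordinates are harder, because the lexicographic order ties them to the earlier coordinates. To avoid handling them directly, we define the enumeration recursively. Let $\pi_n(\varphi)$ be the lexicographically smallest atom of the measure $\varphi_n$ restricted to the first nonempty shell, where $\varphi_n$ denotes $\varphi$ with the atoms $\pi_1,\dots,\pi_{n-1}$ removed according to multiplicity. For a finite measure on a bounded region, the lexicographic minimum can be written coordinatewise as
\[
\min x_1=\sup\{q\in\mathbb{Q}: \varphi(\{x_1<q\})=0\},
\]
and then, conditioning on $x_1$ equal to this value, similarly for $x_2$, and so on. Each step is a countable supremum of measurable indicator expressions, so it is measurable.

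Two further checks complete the argument. First, the map $(\varphi,y)\mapsto\varphi-\delta_y$ (defined when $y$ is an atom) is jointly measurable; this follows because $\varphi(B)-\ind_B(y)$ is measurable in $(\varphi,y)$. Second, an induction on $n$ then gives measurability of every $\pi_n$. Finally, $\Pi_n$ is a product of the coordinate projection $x$ and the maps $\pi_i\circ\mathrm{pr}_2$, so it is measurable with respect to the product $\sigma$-algebra. The hardest part is this bookkeeping for the lexicographic minimum; the alternative is simply to cite the standard result, e.g.\ \cite[Proposition 6.2 / Corollary 6.5]{bo:LP17}.
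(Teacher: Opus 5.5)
Your proposal is correct, but it argues differently from the paper, whose proof is just a citation: the existence of measurable $\pi_n$ is taken from \cite[Proposition 6.3]{bo:LP17}, and measurability of $\Pi_n$ is then immediate because each component is measurable. That second part coincides with your last step.

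You instead construct a concrete enumeration (shell by shell, lexicographically inside a shell) and verify measurability by hand via the recursion $\varphi_{n+1}=\varphi_n-\delta_{\pi_n(\varphi)}$. This makes the lemma self-contained and fixes an explicit ordering. The price is bookkeeping that the citation avoids, and the citation is also not tied to $\R^k$.

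The one step thinner than you present it is the lexicographic minimum beyond the first coordinate. The set $\{x: x_1=m_1(\mu)\}$ depends on $\mu$, so ``a countable supremum of measurable indicator expressions'' does not apply verbatim. It is easily repaired. Since $\mu$ is finite and has no atom with $x_1<m_1(\mu)$, continuity from above gives
\[
\mu(\{x: x_1=m_1(\mu),\, x_2<q\})=\inf_{s\in\mathbb{Q},\, s>m_1(\mu)}\mu(\{x: x_1<s,\, x_2<q\}).
\]
Hence
\[
\{\mu: \mu(\{x_1=m_1(\mu),\,x_2<q\})=0\}=\bigcup_{s\in\mathbb{Q}}\big(\{\mu: m_1(\mu)<s\}\cap\{\mu:\mu(\{x_1<s,\,x_2<q\})=0\}\big)
\]
is measurable. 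The same device, with rational upper thresholds on all earlier coordinates, handles $x_3,\dots,x_k$.

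Your second paragraph, the first-coordinate counting argument, is superseded by the recursion. If you would rather keep its order-statistic idea, you can compose each shell with an injective Borel map into $\R$ (with Borel inverse on its image) and take order statistics there. That handles all coordinates and multiplicities at once, with no recursion needed.
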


\begin{proof}
    For the first assertion, see \cite[Proposition 6.3]{bo:LP17}. The measurability of $\Pi_n$ follows easily. 
\end{proof}

The key to proving Theorem \ref{thm:MeasurabilityOfL} is to realize that 
\begin{equation*}
    L(\varphi) = \{L(\pi_i(\varphi),\varphi): \interior(L(\pi_i(\varphi),\varphi)) \neq \emptyset\}_{i = 1}^{\varphi(\rd \times \R)}.
\end{equation*}

We must therefore first prove the measurability of $(\bfx,\varphi)\mapsto L(\bfx,\varphi)$. Since we later want to disregard cells $L(\bfx,\varphi)$ with empty interiors, we must also distinguish such pairs $(\bfx,\varphi)$ by assigning them an arbitrary set. 

First, we fix some notation. Let $\bfx=(x,m_x)\in\rd \times \R$, $\bfy=(y,m_y)\in\rd \times \R$ and define
\begin{equation*}
        H_{\bfx \leq \bfy}\coloneqq \{z \in \rd: \pd(z,\bfx) \le \pd(z,\bfy) \}=\{z \in \rd: \|z-x\|^2+m_x \le \|z-y\|^2+m_y \}.
\end{equation*}
It can be easily seen that for $\bfx \neq \bfy$ the set $H_{\bfx \leq \bfy}$ is a closed half-space in $\rd$, since 
\begin{equation*}
    H_{\bfx \leq \bfy} 
    = \{z \in \rd:  2\dotp{z}{y-x} \le \|y\|^2 - \|x\|^2 +m_y - m_x\},
\end{equation*}
and that the Laguerre cell is an intersection of such half-spaces. This inspires the following lemma. Denote $\Fd\coloneqq \mathcal{F}(\rd)$.

\begin{lemma}\label{lemma:MeasOfHyperplaneIntersection}
    Let $n \in \n$. Then the mapping $h_n: (\rd \times \R)^{n+1} \rightarrow \Fd$, defined by 
    \begin{equation} \label{def:hnmappinh}
        h_n(\bfx,\bfy_1,\dots,\bfy_n)\coloneqq \bigcap_{i = 1}^n H_{\bfx \leq \bfy_i},
    \end{equation}
    is measurable \wrt\,$\borel((\rd \times \R)^{n+1})$ and $\borel(\Fd)$. 
\end{lemma}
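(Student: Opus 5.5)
The plan is to write $h_n$ as a composition of simple measurable maps and reduce the claim to well-known facts about the Fell topology. First, consider $n=1$ and the map $h_1(\bfx,\bfy)=H_{\bfx\le\bfy}$. The degenerate case needs care: if $\bfx=\bfy$ then $H_{\bfx\le\bfy}=\rd$. More generally, if $x=y$ but $m_x\neq m_y$, then $H_{\bfx\le\bfy}$ is either $\rd$ or $\emptyset$, depending on the sign of $m_y-m_x$. We therefore partition $(\rd\times\R)^2$ into the measurable pieces $\{x\neq y\}$, $\{x=y,\ m_x\le m_y\}$ and $\{x=y,\ m_x>m_y\}$. On the second piece $h_1\equiv\rd$ and on the third $h_1\equiv\emptyset$, and both are trivially measurable. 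On $\{x\neq y\}$ we write $h_1(\bfx,\bfy)=\{z:\dotp{z}{u}\le t\}$ with $u=2(y-x)$ and $t=\|y\|^2-\|x\|^2+m_y-m_x$. These depend continuously on $(\bfx,\bfy)$.

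Next we show that $(u,t)\mapsto\{z:\dotp{z}{u}\le t\}$ is continuous from $(\rd\setminus\{0\})\times\R$ into $(\Fd,\tau_{\mathcal F})$. Since $\Fd$ has a countable base, it suffices to check sequential continuity via the Kuratowski limit characterization of Fell convergence (\cite[Theorem 12.2.2]{bo:SW08}).
\begin{itemize}
\item Every $z$ in the limit half-space is a limit of points $z_j$ in the approximating half-spaces. For instance, take $z_j=z-\max(0,\dotp{z}{u_j}-t_j)\,u_j/\|u_j\|^2$, which converges to $z$ because $u\neq0$.
\item Any cluster point of a sequence $z_j$ with $\dotp{z_j}{u_j}\le t_j$ satisfies $\dotp{z}{u}\le t$.
\end{itemize}
Hence $h_1$ is measurable, being piecewise continuous on a measurable partition.

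For general $n$, we note that $h_n=\cap_n\circ(h_1(\bfx,\bfy_1),\dots,h_1(\bfx,\bfy_n))$, where $\cap_n:\Fd^n\to\Fd$ is the intersection map. Each coordinate map $(\bfx,\bfy_1,\dots,\bfy_n)\mapsto h_1(\bfx,\bfy_i)$ is measurable, so the product map into $\Fd^n$ is measurable with respect to $\borel(\Fd)^{\otimes n}$. Because $\Fd$ is second countable, $\borel(\Fd)^{\otimes n}=\borel(\Fd^n)$. The intersection map $(F,F')\mapsto F\cap F'$ is upper semicontinuous and hence Borel measurable (\cite[Theorem 12.2.6]{bo:SW08}). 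Iterating this gives measurability of $\cap_n$, and composing yields the claim.

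The main obstacle is the degenerate case $x=y$ together with the continuity of the half-space map. Intersection is not continuous in the Fell topology, so we rely on its measurability rather than continuity. If citing the semicontinuity result is deemed insufficient, a fallback is to verify directly that $\{h_n\in\mathcal F^C\}$ is measurable for compact $C$. Such sets generate $\borel(\Fd)$. Using $C=\bigcap_k C_k$ for a decreasing sequence of compact $C_k$ with dense rational approximations, one can reduce the question to finitely many linear inequalities in the parameters $(u_i,t_i)$.
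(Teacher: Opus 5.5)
Your proof is correct, but it takes a different route from the paper. The paper never decomposes $h_n$. It checks the generating sets $\mathcal{F}^C$ directly: $\{h_n\cap C=\emptyset\}$ is the set where $g(z)=\pd(z,\bfx)-\min_i\pd(z,\bfy_i)$ is strictly positive on all of $C$. By compactness, this is the set where $\inf_{z\in C}g(z)>0$. That infimum of a jointly continuous function over a compact set is continuous in the parameters, so the preimage is even open, i.e.\ $h_n$ is upper semicontinuous. (The paper writes $\sup_{z\in C}$ where $\inf_{z\in C}$ is meant.)

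Working with $\pd$ rather than with the parametrization $(u,t)$ has two advantages. The degenerate case $x=y_i$ never appears, and no results on the intersection map are needed. Your approach instead splits off the degenerate cases and proves continuity of the half-space map via the Kuratowski characterization. It then imports upper semicontinuity (hence measurability) of $\cap$ from \cite{bo:SW08}, together with $\borel(\Fd)^{\otimes n}=\borel(\Fd^n)$. This uses more external machinery and a case distinction, but it is modular and gives the extra information that $h_1$ is continuous on $\{x\neq y\}$.

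One small correction: sequential continuity suffices because the domain $(\rd\setminus\{0\})\times\R$ is metrizable. It is not because $\Fd$ is second countable.
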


\begin{proof}
    It is enough to check the measurability on the sets $\mathcal{F}^C$, where $C \in \Cd$, since they generate the $\sigma$-algebra $\borel(\Fd)$. 
    Fix $C \in \Cd$, then 
    \begin{align*}
        \{(\bfx,\bfy_1,&\dots,\bfy_n) \in (\rd \times \R)^{n+1}: \bigcap_{i = 1}^n H_{\bfx \leq \bfy_i} \cap C = \emptyset\} \\
        &  = \{(\bfx,\bfy_1,\dots,\bfy_n) \in (\rd \times \R)^{n+1}: \rho(z,\bfx) > \min_{i = 1,\dots,n} \rho(z,\bfy_i)\; \forall z \in C\}\\
        &  = \bigcup_{k = 1}^\infty\{(\bfx,\bfy_1,\dots,\bfy_n) \in (\rd \times \R)^{n+1}: f(\bfx,\bfy_1,\dots,\bfy_n) \geq \frac{1}{k}\},
    \end{align*}
    where $f(\bfx,\bfy_1,\dots,\bfy_n) \coloneqq \sup_{z \in C} (\rho(z,\bfx) - \min_{i = 1,\dots,n} \rho(z,\bfy_i))$. The measurability follows. 
\end{proof}

Now we prove the measurability of the cell mapping. 

\begin{lemma}\label{lemma:MeasurabilityOfCell}
    Define a mapping $ \Tilde{\Lambda} : (\rd \times \R) \times \mathsf{N}_{s,m}(\rd\times \R) \rightarrow \Fd$ by the formula
    \begin{equation*} 
        \Tilde{\Lambda}(\bfx,\varphi)\coloneqq \begin{cases} L(\bfx,\varphi), &\bfx \in \varphi, \\
        B(0,1), &\bfx \notin \varphi.
        \end{cases}
    \end{equation*}
 Then such a mapping is measurable \wrt\,$\borel(\rd \times \R) \otimes \mathcal{N}_{s,m}(\rd\times \R)$ and $\borel(\Fd)$. Furthermore, the mapping $\Lambda:(\rd \times \R) \times \mathsf{N}_{s,m}(\rd\times \R) \rightarrow \FdAp$ defined by the formula
    \begin{equation} \label{def:MappingLambda}
        \Lambda(\bfx,\varphi)\coloneqq \begin{cases} \Tilde{\Lambda}(\bfx,\varphi), &\mathrm{int}(\Tilde{\Lambda}(\bfx,\varphi)) \neq \emptyset, \\
        B(0,2), & \mathrm{int}(\Tilde{\Lambda}(\bfx,\varphi)) = \emptyset,
        \end{cases}
    \end{equation}
     is measurable \wrt\,$\borel(\rd \times \R) \otimes \mathcal{N}_{s,m}(\rd\times \R)$ and $\borel(\FdAp)$.
\end{lemma}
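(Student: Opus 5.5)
We prove the measurability of $\Tilde{\Lambda}$ by writing it as a countable limit of measurable approximations built from Lemmas \ref{lemma:MeasEnumeration} and \ref{lemma:MeasOfHyperplaneIntersection}, and then obtain $\Lambda$ by splitting on a measurable event.

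\textbf{Step 1: the set $\{\bfx\in\varphi\}$.} The set $D\coloneqq\{(\bfx,\varphi): \bfx \in \varphi\}$ is measurable. Indeed, $D=\bigcup_{n}\{(\bfx,\varphi): n\le \varphi(\rd\times\R),\ \pi_n(\varphi)=\bfx\}$, and each piece is measurable because $\Pi_n$ is measurable and the diagonal is closed. On $D^c$ the map $\Tilde\Lambda$ is constant, so it suffices to treat $\Tilde\Lambda$ on $D$.

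\textbf{Step 2: finite approximations.} On $D$, set
\begin{equation*}
G_n(\bfx,\varphi)\coloneqq h_n\bigl(\bfx,\pi_1(\varphi),\dots,\pi_n(\varphi)\bigr),
\end{equation*}
with the conventions that indices $i>\varphi(\rd\times\R)$ are omitted, and that $\pi_i(\varphi)=\bfx$ gives $H_{\bfx\le\bfx}=\rd$. These conventions are handled by partitioning into the measurable events $\{\varphi(\rd\times\R)=j\}$. Since $\Pi_n$ is measurable (Lemma \ref{lemma:MeasEnumeration}) and $h_n$ is measurable (Lemma \ref{lemma:MeasOfHyperplaneIntersection}), each $G_n$ is measurable. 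The sets $G_n$ decrease, and $L(\bfx,\varphi)=\bigcap_n G_n(\bfx,\varphi)$.

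\textbf{Step 3: passing to the limit.} For a compact $C$, a decreasing sequence of closed sets satisfies
\begin{equation*}
\Bigl(\bigcap_n G_n\Bigr)\cap C=\emptyset \iff G_n\cap C=\emptyset \text{ for some } n,
\end{equation*}
by compactness (finite intersection property). Hence
\begin{equation*}
\Tilde\Lambda^{-1}(\mathcal F^C)\cap D=\bigcup_n G_n^{-1}(\mathcal F^C)\cap D,
\end{equation*}
which is measurable. Since the sets $\mathcal F^C$ generate $\borel(\Fd)$, $\Tilde\Lambda$ is measurable.

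\textbf{Step 4: the interior condition.} For $\Lambda$, I need the measurability of $\{F\in\Fd: \interior F\neq\emptyset\}$ in $\borel(\Fd)$. This is the step I expect to be the main obstacle. I would write
\begin{equation*}
\interior F\neq\emptyset \iff \exists\, q\in\mathbb{Q}^d,\ r\in\mathbb{Q}_{>0}:\ B(q,r)\subseteq F.
\end{equation*}
It then suffices that $\{F: K\subseteq F\}$ is measurable for compact $K$. Choose a countable dense set $\{k_j\}\subseteq K$. Since $F$ is closed, $K\subseteq F$ iff $k_j\in F$ for all $j$. Each condition $k_j\in F$ means $F\in\mathcal F_{\{k_j\}}$, which is the complement of $\mathcal F^{\{k_j\}}$ and hence measurable.

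Finally, $\Lambda$ equals $\Tilde\Lambda$ on the measurable preimage of this set and equals the constant $B(0,2)$ otherwise. So $\Lambda$ is measurable into $\Fd$. Its values are non-empty, and the trace $\sigma$-algebra of $\borel(\Fd)$ on $\FdAp$ coincides with $\borel(\FdAp)$, since $\FdAp$ is open in $\Fd$ and carries the subspace topology. This gives measurability into $\FdAp$.
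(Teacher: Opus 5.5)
Your proof is correct and follows the paper's essential argument: you approximate $L(\bfx,\varphi)$ by the measurable maps $h_n\circ\Pi_n$, split according to the value of $\varphi(\rd\times\R)$, and use the finite intersection property for the decreasing compact sets $G_n\cap C$. The deviations are auxiliary and valid: you obtain $\{\bfx\in\varphi\}$ from the enumeration $\pi_n$ rather than from nested partitions. You also prove measurability of $\{F\in\Fd:\interior F\neq\emptyset\}$ directly, via rational balls and countable dense subsets, instead of citing the measurability of $F\mapsto\closure(F^c)$ from \cite[Theorem 12.2.6 (b)]{bo:SW08}, which makes your version self-contained.
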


\begin{proof}
    We start with the mapping $\Tilde{\Lambda}$. As in Lemma \ref{lemma:MeasOfHyperplaneIntersection}, it is enough to check the measurability on the sets $\mathcal{F}^C$, where $C \in \Cd$. We can write
    \begin{align*}
        \begin{split}
            \{(\bfx,\varphi) &\in (\rd \times \R) \times \mathsf{N}_{s,m}(\rd\times \R) : \Tilde{\Lambda}(\bfx,\varphi) \in \mathcal{F}^C\} \\
            & = \{(\bfx,\varphi) \in (\rd \times \R) \times \mathsf{N}_{s,m}(\rd\times \R) : \Tilde{\Lambda}(\bfx,\varphi) \cap C = \emptyset\} \\
            & = (\{(\bfx,\varphi)  : B(0,1) \cap C = \emptyset\} \cap \{(\bfx,\varphi)  : \varphi(\{\bfx\})=0\}) \\
            & \hspace{3cm }\cup (\{(\bfx,\varphi)  : L(\bfx,\varphi) \cap C = \emptyset\} \cap \{(\bfx,\varphi)  : \varphi(\{\bfx\}) \geq 1\}).
        \end{split}
    \end{align*}
Clearly $\{(\bfx,\varphi)  : B(0,1) \cap C = \emptyset\} \in \{\emptyset, (\rd \times \R) \times \mathsf{N}_{s,m}(\rd\times \R)\}$ and we can write
\begin{align*}
    \begin{split}
        \{(\bfx,\varphi)  : \varphi(\{\bfx\})\geq 1\} = \bigcap_{n = 1}^\infty \bigcup_{k = 1}^\infty \left(\{(\bfx,\varphi)  : \varphi(B_{n,k})\geq 1\} \cap \{(\bfx,\varphi)  : \bfx \in B_{n,k}\}\right)
    \end{split}
\end{align*}
for a suitable sequence  $\{\{B_{n,k}\}_{k \in \n}\}_{n \in \n}$ of partitions of $\rd$.

It remains to show that the set $\{(\bfx,\varphi)  : L(\bfx,\varphi) \cap C = \emptyset\}$ is measurable. To do that, we employ Lemmas \ref{lemma:MeasEnumeration} and \ref{lemma:MeasOfHyperplaneIntersection} to show the measurability of its complement. Recall the notation $\Pi_n$ from \eqref{def:TildePinmapping} and $h_n$ from \eqref{def:hnmappinh}. Then
\begin{align*}
    \begin{split}
        \{(\bfx,\varphi)  &: L(\bfx,\varphi) \cap C \neq \emptyset\} \\
        &= \{(\bfx,\varphi)  : \bigcap_{\bfy \in \varphi} H_{\bfx \leq \bfy} \cap C \neq \emptyset\}\\
        &= \bigcup_{k = 1}^{\infty} (\{(\bfx,\varphi)  : h_k(\Pi_k(\bfx,\varphi)) \cap C \neq \emptyset\} \cap \{(\bfx,\varphi): \varphi(\rd \times \R) = k\})\\
        & \hspace{2cm} \cup (\{(\bfx,\varphi): \bigcap_{\bfy \in \varphi} H_{\bfx \leq \bfy} \cap C \neq \emptyset\} \cap \{(\bfx,\varphi): \varphi(\rd \times \R) = \infty\}) \\
        &= \bigcup_{k = 1}^{\infty} (\{(\bfx,\varphi)  : h_k(\Pi_k(\bfx,\varphi)) \cap C \neq \emptyset\} \cap \{(\bfx,\varphi): \varphi(\rd \times \R) = k\})\\
        & \hspace{2cm} \cup (\bigcap_{n=1}^\infty\{(\bfx,\varphi): h_n(\Pi_n(\bfx,\varphi)) \cap C \neq \emptyset\} \cap \{(\bfx,\varphi): \varphi(\rd \times \R) = \infty\}).
    \end{split}
\end{align*}
In the last equality, we use the fact that the intersection of a non-empty decreasing sequence of compact sets contains at least one point. Using the measurability of $\Pi_n$, $h_n$ and the evaluation mappings $\varphi \mapsto \varphi(A)$, where $A$ is any fixed measurable set, we get $\Tilde{\Lambda}$ to be measurable in the sense specified above. 

 Let $\closure(B)$ denote the closure of $B \in \borel(\rd)$. To prove that $\Lambda$ defined in \eqref{def:MappingLambda} is also measurable, recall that the mapping $F \mapsto \closure(F^c)$ from $\Fd$ to $\Fd$ is measurable (see \cite[Theorem 12.2.6 (b)]{bo:SW08}) and that $\interior(F) = (\closure(F^c))^c$. Therefore,
\begin{align*}
    \begin{split}
        \{(\bfx,\varphi) &\in (\rd \times \R) \times \mathsf{N}_{s,m}(\rd\times \R) : \Lambda(\bfx,\varphi) \in \mathcal{F}^C\} \\
        & = (\{(\bfx,\varphi)  : B(0,2) \cap C = \emptyset\} \cap \{(\bfx,\varphi)  : \interior(\Tilde{\Lambda}(\bfx,\varphi))=\emptyset\}) \\
            & \hspace{3cm }\cup (\{(\bfx,\varphi)  : \Tilde{\Lambda}(\bfx,\varphi) \cap C = \emptyset\} \cap \{(\bfx,\varphi)  : \interior(\Tilde{\Lambda}(\bfx,\varphi))\neq\emptyset\})\\
        & = (\{(\bfx,\varphi)  : B(0,2) \cap C = \emptyset\} \cap \{(\bfx,\varphi)  : \closure(\Tilde{\Lambda}(\bfx,\varphi)^c)=\rd\}) \\
            & \hspace{3cm }\cup (\{(\bfx,\varphi)  : \Tilde{\Lambda}(\bfx,\varphi) \cap C = \emptyset\} \cap \{(\bfx,\varphi)  : \closure(\Tilde{\Lambda}(\bfx,\varphi)^c)=\rd\}^c)
    \end{split}
\end{align*}
is a measurable set and the proof is finished.
\end{proof}

We are now ready to prove Theorem \ref{thm:MeasurabilityOfL}.

\begin{proof}[Proof of Theorem \ref{thm:MeasurabilityOfL}]
 It follows from Lemma \ref{lemma:MeasurabilityOfCell} that a mapping $\kappa: \rd \times \R \times \mathsf{N}_{s,m}(\rd \times \R) \rightarrow \mathcal{F}(\FdAp)$ defined as 
 \begin{equation*}
 \kappa(\bfx,\varphi)\coloneqq
     \begin{cases}
         \{\Lambda(\bfx,\varphi)\}, & \text{ if } \Lambda(\bfx,\varphi) \notin \{B(0,1),B(0,2)\}, \\
         \emptyset, & \text{ otherwise}, 
     \end{cases}
 \end{equation*}
 is measurable, since for any $C \in \mathcal{C}(\FdAp)$,
 \begin{align*}
     \begin{split}
         \{(\bfx,\varphi): \kappa(\bfx,\varphi) \in \mathcal{F}^C\}^c&= \{(\bfx,\varphi): \kappa(\bfx,\varphi) \cap C \neq \emptyset\} \\
         &= (\{(\bfx,\varphi): \Lambda(\bfx,\varphi) \in C\} \cap \{ (\bfx,\varphi): \Lambda(\bfx,\varphi) \not\in \{B(0,1),B(0,2)\}\}) \\
         & \hspace{1cm}\cup (\{(\bfx,\varphi): \emptyset \in C\}  \cap \{ (\bfx,\varphi): \Lambda(\bfx,\varphi) \in \{B(0,1),B(0,2)\}\}).
     \end{split}
 \end{align*}
  Since the union map is measurable, see \cite[Theorem 12.2.3]{bo:SW08}, we get that for all $n\in \n$ the mappings $L_1^n(\varphi): \mathsf{N}_{s,m}(\rd \times \R) \rightarrow \mathcal{F}(\FdAp) $, defined by
\begin{equation*}
    L_1^{n}(\varphi) \coloneqq \bigcup_{i = 1}^{\max\{n,\varphi(\rd \times \R)\}} \kappa (\pi_i(\varphi),\varphi),
\end{equation*}
are also measurable (note that $L_1^n(\emptyset) \coloneqq \emptyset$). Consider the mapping $L(\varphi)$ defined in \eqref{def:LaguerreDiagram}. Thanks to our definitions of $\Lambda$ and $\kappa$, we get that $L(\varphi) = \cup_{n = 1}^ \infty L_1^n(\varphi)$ and therefore
\begin{align*}
    \begin{split}
        \{\varphi \in \mathsf{N}_{s,m}(\rd \times \R): L(\varphi) \in \mathcal{F}^C\} &= \{\varphi \in \mathsf{N}_{s,m}(\rd \times \R): L(\varphi) \cap C = \emptyset\} \\
        &= \bigcap_{n = 1}^\infty \{\varphi \in \mathsf{N}_{s,m}(\rd \times \R): L_1^n(\varphi) \cap C = \emptyset\},
    \end{split}
\end{align*}
which finishes the proof of the measurability of $L$. 
\end{proof}

\subsection{Proof of Theorem \ref{thm:Main} (a) and (b)} \label{subsec:Proofab}

The proof of the first part of Theorem \ref{thm:Main} is a straightforward consequence of Lemma \ref{lemma:ExistenceLagTess}, which is in principle a slightly more general claim and could be used instead of the implication in (a), see Section \ref{subsec:OtherEx}. The ergodicity and mixing then follow thanks to Theorem \ref{thm:MeasurabilityOfL}. 

\begin{proof}[Proof of Theorem \ref{thm:Main} (a) and (b)] We start with part (a). Recall the notation \eqref{def:hattrans}. According to Lemma \ref{lemma:ExistenceLagTess}, it is enough to show that $\p(\hat{\ppp} \in \Msettemp)=1$. To do that, we apply the same idea as in the proof of \cite[Lemma 1]{ar:NaseEYSM}. Denote $h(m)\coloneqq 1+(\sqrt{m_-})^{d+\delta}$, then \eqref{as:momentM_}, stationarity and ergodicity together with \cite[Corollary 12.2.V (b)]{bo:DVJ08b} give us that 
    \begin{equation*} 
        \frac{1}{\lambda^d(kB^d)} \sum_{(x,m) \in \ppp \cap (kB^d \times \R)} h(m) \overset{\as}{\underset{k \rightarrow \infty}{ \longrightarrow}} \gamma \int_{\R} h(m)\, \dx \markdist(m) \in (0,\infty).
    \end{equation*}
    This implies that $\sup_{k \in \n} \frac{1}{\lambda^d(B^d) k^d} \sum_{(x,m) \in \ppp \cap (kB^d \times \R)} h(m) < \infty$ a.s., which furthermore implies that $\p(\hat{\ppp} \in \Msettemp)=1$ and therefore $L(\ppp)$ is \as a tessellation of $\rd$. 
    
    To prove the ergodicity, recall the measurability of $L(\cdot)$ proved in Theorem \ref{thm:MeasurabilityOfL}. Thanks to the natural duality of the mappings $T_x$ and $\mathcal{T}_x$, we can write for any $A,B \in \borel(\mathcal{F}(\FdAp))$,
    \begin{align*} 
        \lim_{a \rightarrow \infty} \frac{1}{\lambda^d((-a,a)^d)} &\int_{(-a,a)^d} \p(L(\ppp) \in A, \mathcal{T}_z(L(\ppp)) \in B) \,\dx z  \\
        &=\lim_{a \rightarrow \infty} \frac{1}{\lambda^d((-a,a)^d)} \int_{(-a,a)^d} \p(L(\ppp) \in A, L(T_z(\ppp)) \in B) \,\dx z\\
        &= \lim_{a \rightarrow \infty} \frac{1}{\lambda^d((-a,a)^d)} \int_{(-a,a)^d}  \p(\ppp \in L^{-1}(A), T_z(\ppp) \in L^{-1}(B)) \,\dx z\\\
        &= \p(\ppp \in L^{-1}(A)) \p(\ppp \in L^{-1}(B)) = \p(L(\ppp) \in A) \p(L(\ppp) \in B),
    \end{align*}
 which finishes the proof of part (a). The mixing property claimed in part (b) would be proved analogously. 
\end{proof}  

In particular, a mixing random tessellation $X$ satisfies
\begin{equation} \label{cond:mixingproperty}
 \lim_{\norm{x} \rightarrow \infty} \p(X\cap C_1 = \emptyset,X\cap (C_2+x) = \emptyset )= \p(X\cap C_1 = \emptyset) \p(X\cap C_2 = \emptyset) 
 \end{equation}
 for all $ C_1,C_2 \in \mathcal{C}(\FdAp)$. In fact, \eqref{cond:mixingproperty} is a necessary condition, see  \cite[Theorem 9.3.2]{bo:SW08}, which is used in \cite[Theorem 10.5.1]{bo:SW08} to prove the mixing property of Poisson--Voronoi tessellation. Although we do not follow this approach to prove the mixing property of random Laguerre tessellation -- presenting instead shorter, albeit very non-informative proof -- we do fundamentally expand the approximation arguments from \cite[Section 10.5]{bo:SW08} in the following section to prove the last part of Theorem \ref{thm:Main}. 

\subsection{Proof of Theorem \ref{thm:Main} (c)} \label{subsec:Proofc}

Recall Definition \ref{def:MixTess}~(iii) of an $\alpha$-mixing random tessellation. The coefficient $\alpha(\sigma_X(\tailFnula{a}),\sigma_X(\tailFnula{-b}))$ can be in fact computed as a supremum over smaller sets, since the generating sets $\tailFnula{a}$ and $\tailFnula{-b}$ are semi-algebras in $\mathcal{F}(\FdAp)$, see \cite[Lemma 2.2.2 (a)]{bo:SW08}. Let $\tailF{a}^1$ and $\tailF{-b}^1$ denote the algebras generated by $\tailFnula{a}$ and $\tailFnula{-b}$, \ie the sets of all finite disjoint unions of sets from $\tailFnula{a}$ and $\tailFnula{-b}$. 
\begin{lemma}\label{lemma:alphamixingaltdef}
    Let $X$ be a random tessellation and $a,b > 0$, then
\begin{equation*} 
    \alpha(\sigma_X(\tailFnula{a}),\sigma_X(\tailFnula{-b}))= \sup_{\mathsf{A} \in \tailF{a}^1,  \mathsf{B} \in \tailF{-b}^1} \abs{\p(X \in \mathsf{A}, X \in \mathsf{B}) - \p(X \in \mathsf{A}) \p(X \in \mathsf{B})}.
\end{equation*}
\end{lemma}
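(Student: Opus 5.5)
The inequality ``$\geq$'' is immediate. Every $\mathsf{A} \in \tailF{a}^1$ is a finite disjoint union of sets from $\tailFnula{a}$, each of which lies in $\borel(\mathcal{F}(\FdAp))$, so $\{X \in \mathsf{A}\} \in \sigma_X(\tailFnula{a})$; the same holds for $\mathsf{B}$. Hence the right-hand side is a supremum over a subfamily of the pairs defining $\alpha(\sigma_X(\tailFnula{a}),\sigma_X(\tailFnula{-b}))$.

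For ``$\leq$'' I would use a standard approximation. First note that $\sigma_X(\tailFnula{a}) = \{\{X\in \mathsf{A}\}: \mathsf{A} \in \sigma(\tailFnula{a})\}$, since taking preimages under $X$ commutes with generating $\sigma$-algebras. Next, $\tailFnula{a}$ is a semi-algebra by \cite[Lemma 2.2.2 (a)]{bo:SW08}, so $\tailF{a}^1$ (finite disjoint unions) is an algebra generating $\sigma(\tailFnula{a})$; likewise for $-b$. Consider the image measure $\mu \coloneqq \p_X$ on $\borel(\mathcal{F}(\FdAp))$. By the approximation theorem for measures (e.g.\ via Carathéodory, or the monotone class argument that the class of sets $\epsilon$-approximable by algebra elements is a $\sigma$-algebra), for every $\mathsf{A}' \in \sigma(\tailFnula{a})$ and $\epsilon>0$ there is $\mathsf{A} \in \tailF{a}^1$ with $\mu(\mathsf{A}'\triangle \mathsf{A})<\epsilon$. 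Similarly, for $\mathsf{B}'\in\sigma(\tailFnula{-b})$ there is $\mathsf{B}\in\tailF{-b}^1$ with $\mu(\mathsf{B}'\triangle \mathsf{B})<\epsilon$.

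Then
\begin{equation*}
|\p(X\in \mathsf{A}'\cap \mathsf{B}') - \p(X\in \mathsf{A}\cap \mathsf{B})| \le \mu(\mathsf{A}'\triangle \mathsf{A}) + \mu(\mathsf{B}'\triangle \mathsf{B}) < 2\epsilon,
\end{equation*}
and similarly $|\p(X\in \mathsf{A}')\p(X\in \mathsf{B}') - \p(X\in \mathsf{A})\p(X\in \mathsf{B})| < 2\epsilon$, because all probabilities are bounded by $1$. So the covariance-type difference for $(\mathsf{A}',\mathsf{B}')$ is within $4\epsilon$ of that for $(\mathsf{A},\mathsf{B})$, which is at most the right-hand side. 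Letting $\epsilon\to 0$ and taking the supremum gives ``$\leq$''.

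The only delicate point is the approximation step. The approximating algebra must generate exactly the $\sigma$-algebra whose preimages form $\sigma_X(\tailFnula{a})$, and the approximation is carried out with respect to the finite measure $\p_X$ rather than on $\Omega$. Both points are handled by the preimage identity together with the semi-algebra property; the rest is routine.
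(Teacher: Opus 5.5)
Your proof is correct and follows the paper's approach: the paper simply defers to the argument of \cite[Lemma 9.3.1]{bo:SW08}, which likewise passes from the semi-algebra to the algebra of finite disjoint unions and then approximates sets of the generated $\sigma$-algebra in $\p_X$-measure by elements of that algebra. Your write-up, including the preimage identity $\sigma_X(\tailFnula{a})=X^{-1}(\sigma(\tailFnula{a}))$ and the $4\varepsilon$ estimate, supplies the details that this reference leaves implicit.
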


\begin{proof} 
The claim can be proved analogously as \cite[Lemma 9.3.1]{bo:SW08}.
\end{proof}

To prove the last implication of Theorem \ref{thm:Main}, we start with three preparatory lemmas. Their main purpose is to show that for tempered configurations, the existence of a very large cell entails the presence of a proportionally large vacant region. Since the nucleus may lie outside (and in principle very far away from) its cell, we must to the largeness of the cell include, in some sense, the distance of its nucleus. We treat separately the cells around the origin and those located far away from it.

First, we introduce three special subsets of admissible Laguerre generators. Let $r \geq 1$, $K > 0$ and define 
\begin{align}
    \begin{split} \label{def:NotationSetsE}
        E^1_{r,K} &\coloneqq \{\varphi: \exists L \in L(\varphi) \text{ s.t. } L \cap rB^d \neq \emptyset \text{ and } (x_L,m_L) \in (KrB^d\times\R)^c \}, \\
         E^2_{r,K} &\coloneqq \{\varphi: \exists L \in L(\varphi) \text{ s.t. } L \cap (KrB^d)^c \neq \emptyset \text{ and } (x_L,m_L) \in rB^d\times\R \},\\
         E^3_{r,K}&\coloneqq \{\varphi:\exists L \in L(\varphi) \text{ s.t. } L \cap rB^d \neq \emptyset \text{ and } L \not\subseteq(K+1)rB^d\},
    \end{split}
\end{align}
where $(x_L,m_L)$ denotes the generator of the cell $L$. Recall the notation $\hatMsetl$ introduced in \eqref{def:hatMsetl} and $k(l)$ from Lemma \ref{lemma:PropertiesTempered}.

\begin{lemma} \label{LBlemma:BoundErHat} 
    Let $l \in \n$ be large enough so that $c_l\coloneqq \sqrt{3k(l)^2+2k(l)-7}-1 > 0$ and choose $K_1(l)\geq 2k(l)+3$. Then for any $r \geq 1$,
    \begin{equation*}
        \varphi \in \hatMsetl \cap E^1_{r,K_1(l)} \implies  \varphi(rc_lB^d \times (-\infty,r^2]) = 0.
    \end{equation*}
\end{lemma}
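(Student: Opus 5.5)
The plan is a direct contradiction argument. Take $\varphi \in \hatMsetl \cap E^1_{r,K_1(l)}$, a cell $L \in L(\varphi)$ with generator $(x_L,m_L)$, $\norm{x_L} > K_1(l) r$, and a point $z \in L \cap rB^d$. Suppose there is $(y,m)\in\varphi$ with $\norm{y}\le rc_l$ and $m\le r^2$. Since $z \in L$, we have $\pd(z,(x_L,m_L)) \le \pd(z,(y,m))$. The aim is to show the reverse strict inequality.

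For the lower bound on the left-hand side I will use \eqref{Bound-farpoints} of Lemma \ref{lemma:PropertiesTempered}, applied to $\varphi$ and not to $\hat\varphi$; its statement is already phrased for $\varphi\in\hatMsetl$. I will take $k \coloneqq k(l)\,r \ge k(l)$. Because $r\ge1$ and $K_1(l)\ge 2k(l)+3$,
\begin{equation*}
\norm{x_L} > (2k(l)+3)r \ge 2k(l)r+1 = 2k+1,
\end{equation*}
so \eqref{Bound-farpoints} gives $m_L \ge -(\norm{x_L}-k)^2$. Also $\norm{x_L - z}\ge \norm{x_L}-r>0$. Hence
\begin{equation*}
\pd(z,(x_L,m_L)) \ge (\norm{x_L}-r)^2-(\norm{x_L}-k)^2 = (k-r)\bigl(2\norm{x_L}-r-k\bigr).
\end{equation*}
The condition $c_l>0$ forces $3k(l)^2+2k(l)-8>0$, hence $k(l)>1$ and $k-r=r(k(l)-1)>0$. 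The expression is then increasing in $\norm{x_L}$, so I can plug in $\norm{x_L}\ge(2k(l)+3)r$ to obtain
\begin{equation*}
\pd(z,(x_L,m_L)) \ge r^2(k(l)-1)(3k(l)+5)=r^2\bigl(3k(l)^2+2k(l)-5\bigr).
\end{equation*}

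For the upper bound on the right-hand side, simply
\begin{equation*}
\pd(z,(y,m))\le(\norm{y}+\norm{z})^2+m\le r^2(c_l+1)^2+r^2 = r^2\bigl(3k(l)^2+2k(l)-6\bigr),
\end{equation*}
using the definition of $c_l$. This is strictly smaller than the lower bound, which contradicts $z\in L$. Therefore $\varphi(rc_lB^d\times(-\infty,r^2])=0$.

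The only delicate step is the choice $k=k(l)r$, scaled with $r$. It must be large enough that $k-r$ is of order $r$, yet small enough that $\norm{x_L}>2k+1$ still holds under $K_1(l)\ge 2k(l)+3$. I also need to check the sign conditions that make the monotonicity argument valid, namely $k(l)>1$, which follows from $c_l>0$, and $\norm{x_L}>r$. The rest is arithmetic, with $c_l$ visibly tuned so that the constants $-6$ and $-5$ give the strict inequality.
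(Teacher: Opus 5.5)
Your proof is correct and follows the paper's argument. Like the paper, you apply \eqref{Bound-farpoints} at scale $k(l)r$ to the generator of the cell, use $z\in L\cap rB^d$, and compare against the power distance of any generator in $rc_lB^d\times(-\infty,r^2]$; the differences are cosmetic: bounding $\norm{x_L-z}\ge\norm{x_L}-r$ gives you the constant $-5$ instead of the paper's $-6$, and taking $k=k(l)r$ directly rather than checking the hypothesis via $\lceil k(l)r\rceil$ is legitimate, since Lemma \ref{lemma:PropertiesTempered} is stated for all real $k\ge k(l)$.
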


\begin{proof}
    Assume that $\varphi \in \hatMsetl \cap E^1_{r,K_1(l)}$  and let $L$ be the prescribed cell. Then there exists $z \in rB^d \cap L$ and therefore 
   $\norm{y-z}^2+t \geq \norm{x_L - z}^2+m_L$  holds for all  $(y,t) \in \varphi$. 
   
   Since $(x_L,m_L) \in (K_1(l)rB^d \times \R)^c$ and $K_1(l)r \geq (2k(l)+3)r \geq 2\lceil k(l)r\rceil+1$, the property \eqref{Bound-farpoints} implies that for any $(y,t) \in \varphi$,
    \begin{align*}
        (\norm{y}+\norm{z})^2 +t \geq \norm{y-z}^2 + t &\geq \norm{x_L - z}^2-(\norm{x_L}-k(l)r)^2 \\
        &\geq -2\norm{x_L}\norm{z} + \norm{z}^2+2k(l)r\norm{x_L}-(k(l)r)^2\\
        &\geq -2\norm{x_L}r +2k(l)r\norm{x_L}-(k(l)r)^2\\
        &> 2r^2(2k(l)+3) (k(l)-1) -(k(l)r)^2\\
        &=r^2 (3k(l)^2+2k(l)-6).
    \end{align*}
 Therefore, any $(y,t) \in \varphi$ such that $t \leq r^2$ must satisfy
    \begin{equation*}
        \norm{y} > r \sqrt{3k(l)^2+2k(l)-7} - \norm{z} \geq rc_l,
    \end{equation*}
    and consequently $\varphi(rc_lB^d \times (-\infty,r^2]) = 0$, which completes the proof. 
\end{proof}

 Denote by $B(x,r,R)\coloneqq B(x,R) \setminus B(x,r)$ the annulus with centre $x$ and radii $r< R$. 

\begin{lemma} \label{LBlemma:BoundErTilde}
    Let ${A_1,\dots, A_{J_d}}$ be a partition of $\rd$, where each set $A_i$,  $i = 1, \dots, J_d$, is a closed cone with a vertex at the origin such that $\dotp{u}{v} \geq \frac{1}{2}\norm{u}\norm{v}$ holds for all $u, v \in A_i$. 
    Let $l \in \n$ and fix $K_2(l) > 4l^{\frac{2}{d+\delta}}+10$ and denote $\alpha(l)\coloneqq \big(K_2(l) - 4l^{\frac{2}{d+\delta}}-1\big)^{\frac{1}{2}}$. Then for any $r \geq 1$, 
    \begin{equation*}
        \varphi \in  \hatMsetl \cap E^2_{r,K_2(l)} \implies \exists i \in \{1,\dots,J_d\} \text{ such that } \varphi((A_i \cap B(0,3r,\alpha(l)r)) \times (-\infty,r^2]) = 0. 
    \end{equation*}
\end{lemma}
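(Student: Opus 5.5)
The idea is to argue by contradiction directly from the defining inequality of a Laguerre cell. If a point $y$ lies in the sector of the cone that points towards a far-away point $z$ of the cell, then $y$ is power-closer to $z$ than the nucleus is. So no generator with small mark can sit in that sector.

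Fix $\varphi\in\hatMsetl\cap E^2_{r,K_2(l)}$ and let $L$ be the prescribed cell. Its generator $(x_L,m_L)$ satisfies $\norm{x_L}\le r$, and there is a point $z\in L$ with $\norm{z}>K_2(l)r$. Choose $i$ with $z\in A_i$. I claim this $i$ works.

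\textbf{Step 1: lower bound on the nucleus' power distance.} By \eqref{Bound-anypoints}, and using $2d/(d+\delta)\le 2$ together with $\norm{x_L}+1\le 2r$ (recall $r\ge1$), we get
\[
m_L>-l^{\frac{2}{d+\delta}}(2r)^{2} = -4l'r^2, \qquad l'\coloneqq l^{\frac{2}{d+\delta}}.
\]
Also $\norm{x_L-z}^2\ge(\norm{z}-r)^2$. Writing $Z\coloneqq\norm{z}$, this gives
\[
\pd(z,(x_L,m_L))> Z^2-2rZ+r^2-4l'r^2 .
\]

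\textbf{Step 2: upper bound for a candidate point in the sector.} Suppose, for contradiction, that $(y,t)\in\varphi$ with $y\in A_i\cap B(0,3r,\alpha(l)r)$ and $t\le r^2$. Put $s\coloneqq\norm{y}\in[3r,\alpha(l)r]$. The cone property $\dotp{y}{z}\ge\frac12 sZ$ gives
\[
\pd(z,(y,t))\le s^2+Z^2-sZ+r^2 .
\]
Since $z\in L$, we must have $\pd(z,(y,t))\ge\pd(z,(x_L,m_L))$. Combining with Step 1 and cancelling $Z^2+r^2$, this would force
\[
f(s)\coloneqq s(Z-s)-2rZ<4l'r^2 .
\]

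\textbf{Step 3: showing $f(s)>4l'r^2$ on the whole range.} This is the main (though elementary) obstacle. Since $f$ is concave in $s$, it suffices to check the two endpoints.

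At $s=3r$ we have $f=rZ-9r^2>(K_2(l)-9)r^2>4l'r^2$, because $K_2(l)>4l'+10$.

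At $s=\alpha r$ with $\alpha=\alpha(l)$ we have $\alpha^2=K_2(l)-4l'-1>9$, so $\alpha>3$. Then
\[
f=(\alpha-2)rZ-\alpha^2r^2\ge\big((\alpha-2)K_2(l)-\alpha^2\big)r^2 .
\]
Substituting $\alpha^2=K_2(l)-4l'-1$, the requirement $f>4l'r^2$ reduces to $(\alpha-3)K_2(l)>-1$, which holds since $\alpha>3$.

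\textbf{Conclusion.} In both cases $f(s)>4l'r^2$, contradicting Step 2. Hence no point of $\varphi$ lies in $(A_i\cap B(0,3r,\alpha(l)r))\times(-\infty,r^2]$. The condition $\alpha(l)>3$ also shows the annulus is nonempty, so the statement is not vacuous.
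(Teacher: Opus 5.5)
Your proof is correct and follows essentially the paper's argument: a contradiction from the cell inequality $\pd(z,(y,t))\ge\pd(z,(x_L,m_L))$ at a far point $z\in L\cap A_i$, combined with the cone condition and the bound \eqref{Bound-anypoints} on $m_L$. The only difference is in the final estimate: the paper avoids your concavity and endpoint check by bounding $(\norm{y}-2\norm{x_L})\norm{z}>K_2(l)r^2$ (using $\norm{y}\ge 3r$ and $\norm{x_L}\le r$), from which $\norm{y}^2>\alpha(l)^2r^2$ follows directly.
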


Notice that $\alpha(l) > 3$, so the annulus is well-defined. 

\begin{proof}
    Assume that $\varphi \in \hatMsetl \cap E^2_{r,K_2(l)}$ and $L \in L(\varphi)$ is the cell satisfying
    \begin{itemize}
        \item[(i)] $L \cap (K_2(l)rB^d)^c \neq \emptyset$,
        \item[(ii)] $(x_L,m_L) \in rB^d\times\R$.
    \end{itemize}
    Take $z \in L \cap (K_2(l)rB^d)^c$ and let $i \in \{1,\dots,J_d\}$ be such that $z \in A_i$. Assume for contradiction that there exists $(y,m_y)\in \varphi \cap ((A_i \cap B^d(0,3r,\alpha(l)r)) \times (-\infty,r^2])$, then 
    \begin{equation} \label{auxbound1}
        \norm{y-z}^2+r^2 \geq \norm{y-z}^2+m_y \geq \norm{x_L-z}^2+m_L.
    \end{equation}
    Therefore, using $(\norm{y}-2\norm{x_L})\norm{z} > K_2(l)r^2$ and $y,z \in A_i$, we can write 
    \begin{align*}
       \norm{y}^2-K_2(l)r^2 &>\norm{y}^2-\norm{y}\norm{z} + 2\norm{x_L}\norm{z} 
       \geq\norm{y}^2-2\dotp{y}{z}-\norm{x_L}^2 + 2\dotp{x_L}{z}\\
        &= \norm{y-z}^2 -\norm{x_L-z}^2 
        \overset{\eqref{auxbound1}}{\geq} m_L - r^2 
        \overset{\eqref{Bound-anypoints}}{>} -l^{\frac{2}{d+\delta}}(\norm{x_L}+1)^{\frac{2d}{d+\delta}}-r^2 \\
        &\overset{(ii)}{\geq} -l^{\frac{2}{d+\delta}}(2r)^{\frac{2d}{d+\delta}}-r^2         \geq -(4l^{\frac{2}{d+\delta}}+1)r^2,
    \end{align*}
    which is in contradiction with $\norm{y}\leq \alpha(l) r  =  \big(K_2(l)-4l^{\frac{2}{d+\delta}}-1\big)^{\frac{1}{2}}r$.
\end{proof}

In the next lemma, use the notation $rB_j\coloneqq B(r x, ra)$ for a general ball $B_j = B(x,a)$, $x \in \rd$, $a \geq 0$ and $r > 0$. 

\begin{lemma} \label{lemma:boundEr}
    Let $l \in \n$, $K_1(l) \geq 2k(l)+3$ and define $C_l\coloneqq 2l^{\frac{1}{d+\delta}}K_1(l)$, $\beta_l\coloneqq \sqrt{C_l^2-1}-1$ and $K_3(l)\coloneqq \sqrt{8}C_l$. Then for any $r\geq 1$,
    \begin{align*}  
    \begin{split}
    \varphi \in E^3_{r,K_3(l)} &\cap \hatMsetl \cap (E^1_{r,K_1(l)})^c \\
    &\implies \varphi(\beta_lrB^d\times(-\infty,r^2])=0 \text{ or } \exists j \in \{1,\dots,\theta_l\} \text{ such that } \varphi(\beta_lrB_j\times(-\infty,r^2])=0 ,
    \end{split}
\end{align*} 
    where $\theta_l$ is the minimal number of balls with unit radius needed to cover the ball $(K_3(l)+1)B^d$ and $B_1,\dots,B_{\theta_l}$ is the minimal cover.
\end{lemma}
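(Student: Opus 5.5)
The plan is to use the cell $L$ witnessing $\varphi\in E^3_{r,K_3(l)}$ to locate a point $u\in L$ at distance of order $r$ from the origin whose power distance to its own nucleus is large. Every generator $(y,t)\in\varphi$ with $t\le r^2$ has power distance to $u$ at least $\pd(u,(x_L,m_L))$. This will force a ball of radius about $\beta_l r$ around $u$ to be free of such generators, and covering $(K_3(l)+1)rB^d$ by the scaled balls $rB_j$ then converts this into the stated conclusion. In fact I expect to always land in the second alternative (some $\beta_l rB_j$ is empty). The first alternative is only a harmless extra option.

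\emph{Step 1 (bounds on the generator).} Let $L\in L(\varphi)$ with $L\cap rB^d\neq\emptyset$ and $L\not\subseteq (K_3(l)+1)rB^d$. Since $\varphi\notin E^1_{r,K_1(l)}$, the generator satisfies $\norm{x_L}\le K_1(l)r$. By \eqref{Bound-anypoints} and $r\ge1$, $K_1(l)\ge1$,
\begin{equation*}
m_L> -l^{\frac{2}{d+\delta}}(K_1(l)r+1)^{\frac{2d}{d+\delta}}\ge -l^{\frac{2}{d+\delta}}(2K_1(l)r)^2=-C_l^2r^2,
\end{equation*}
using $2d/(d+\delta)\le 2$ and $2K_1(l)r\ge1$. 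Also $\norm{x_L}\le K_1(l)r\le C_lr/2$, because $l\ge1$.

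\emph{Step 2 (choice of $u$).} Pick $z\in L\cap rB^d$ and $w\in L$ with $\norm{w}>(K_3(l)+1)r$. Since $L$ is convex, the segment $[z,w]$ lies in $L$. Because $\norm{z}\le r<K_3(l)r<\norm{w}$, continuity gives $u\in[z,w]$ with $\norm{u}=K_3(l)r=\sqrt8C_lr$. Then
\begin{equation*}
\norm{x_L-u}\ge \sqrt8C_lr-C_lr/2\ge\sqrt2 C_lr .
\end{equation*}

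\emph{Step 3 (the empty ball).} Since $u\in L$, every $(y,t)\in\varphi$ with $t\le r^2$ satisfies
\begin{equation*}
\norm{y-u}^2+r^2\ge\norm{y-u}^2+t\ge\norm{x_L-u}^2+m_L>2C_l^2r^2-C_l^2r^2,
\end{equation*}
so $\norm{y-u}>\sqrt{C_l^2-1}\,r=(\beta_l+1)r$. Note that $C_l\ge 2K_1(l)\ge 2$, so the square root is real. As $\norm{u}\le (K_3(l)+1)r$, the cover gives some $j\le\theta_l$ with $u\in rB_j$, i.e. $\norm{u-rx_j}\le r$ for the centre $x_j$ of $B_j$. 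The triangle inequality then yields $\beta_lrB_j=B(rx_j,\beta_lr)\subseteq B(u,(\beta_l+1)r)$. By the strict inequality above, this ball contains no generator with mark at most $r^2$. Hence $\varphi(\beta_lrB_j\times(-\infty,r^2])=0$, which proves the lemma.

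\emph{Main obstacle.} The main thing to get right is the bookkeeping of constants. First, the exponent reduction $(K_1r+1)^{2d/(d+\delta)}\le(2K_1r)^2$ must give exactly $C_l^2$. Second, the radius of $u$ must be chosen so that $\norm{x_L-u}^2-C_l^2r^2-r^2\ge(\beta_l+1)^2r^2$ holds while $u$ stays inside the covered ball $(K_3(l)+1)rB^d$. Third, the closed-versus-open boundary issue has to be handled, which the strict inequality in \eqref{Bound-anypoints} takes care of.
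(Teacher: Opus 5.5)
Your proof is correct and complete, and its core matches the paper's. Three ingredients are shared:
\begin{itemize}
\item the tempered bound \eqref{Bound-anypoints} together with $\varphi\notin E^1_{r,K_1(l)}$ gives $m_L>-C_l^2r^2$;
\item a point of $L$ at distance at least $K_3(l)r/2=\sqrt2\,C_lr$ from $x_L$ then has power distance greater than $C_l^2r^2$ to every generator;
\item the cover by unit balls turns this into an empty scaled ball.
\end{itemize}

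The difference is how the far point is produced. The paper takes $w\in L\cap rB^d$ and $z\in L$ with $\norm{z}=(K_3(l)+1)r$. Since $\norm{w-z}\ge K_3(l)r$, it gets $\max\{\norm{w-x_L},\norm{z-x_L}\}\ge K_3(l)r/2$ without knowing where $x_L$ lies. It therefore has to split into two cases: the case of $w$ gives the first alternative $\varphi(\beta_lrB^d\times(-\infty,r^2])=0$, and the case of $z$ gives the second.

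You use the location bound $\norm{x_L}\le K_1(l)r\le C_lr/2$ a second time; the paper uses it only to bound the mark. This guarantees that your single point $u\in[z,w]$ with $\norm{u}=K_3(l)r$ is already far enough from the nucleus, because $\sqrt8-\tfrac12\ge\sqrt2$. That removes the case split. It also shows that the first alternative of the lemma is never needed, and hence neither is the term $\p(\ppp(B(0,\beta_lr)\times(-\infty,r^2])=0)$ in the proof of Lemma \ref{lemma:ProbOfErEvents}.

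What the paper's version buys in exchange is that its geometric step is independent of where the nucleus sits.
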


\begin{proof}
    Assume that $\varphi \in E^3_{r,K_3(l)} \cap \hatMsetl \cap (E^1_{r,K_1(l)})^c$. Then there exists $L \in L(\varphi)$ such that 
    \begin{itemize}
        \item[(i)] $\exists w \in L \cap rB^d$ and $\exists z \in L$ with $\norm{z} = (K_3(l)+1)r$,
        \item[(ii)] $x_L \in K_1(l)rB^d$ and $m_L > -l^{\frac{2}{d+\delta}}(\norm{x_L}+1)^\frac{2d}{d+\delta} \geq -l^{\frac{2}{d+\delta}}(K_1(l)r+1)^2$ due to \eqref{Bound-anypoints}.
       \end{itemize}
Particularly, we must have $\max{\{\norm{w-x_L}, \norm{z-x_L}\}}   \geq \frac{K_3(l)}{2}r$. 

Assume first that $\norm{w-x_L} \geq \frac{K_3(l)}{2}r$, then for any $(y,t) \in \varphi$, 
\begin{align*}
    \norm{w-y}^2+t &\geq \norm{w-x_L}^2+m_L \\
    &\overset{(ii)}{>} \frac{K_3(l)^2}{4}r^2 -l^{\frac{2}{d+\delta}}(K_1(l)r+1)^2 
    \geq \left(\frac{K_3(l)^2}{4} -4l^{\frac{2}{d+\delta}}K_1(l)^2 \right)r^2. 
\end{align*}
Plugging in the definition of $K_3(l)$, we get $\norm{y-w}^2 + t > C_l^2 r^2$ for any $(y,t) \in \varphi$. Therefore, by the triangle inequality any $(y,t) \in \varphi\cap (\rd \times (-\infty,r^2])$ must satisfy $\norm{y} > (\sqrt{C_l^2-1}-1) r= \beta_l r$.

On the other hand, if we assume that $\norm{z-x_L} \geq \frac{K_3(l)}{2}r$ holds, we get by the same arguments that $\norm{z-y}^2+t > C_l^2 r^2$ for any $(y,t) \in \varphi$. Taking $B_1, \dots, B_{\theta_l}$ the unit volume balls that cover $(K_3(l)+1)B^d$, and $j$ such that $z \in rB_j$, it must hold that $y \notin \beta_lrB_j$ for all $(y,t) \in \varphi\cap (\rd \times (-\infty,r^2])$. 

Altogether we find that the desired implication holds. 
\end{proof}

The following observation shows that for a stationary marked point process, the probability that we observe a growing empty space converges to 0. 

\begin{lemma} \label{lemma:EmptySetLimit}
    Let $\ppp$ be a stationary marked point process on $\rd \times \R$ such that it is \as non-empty. Then for any $x \in \rd$ and any $a > 0$, we have
    \begin{equation*}
        \lim_{r \rightarrow \infty}\p(\ppp(B(rx,ra) \times (-\infty,r^2])=0) =0. 
    \end{equation*}
\end{lemma}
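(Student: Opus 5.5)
Write $E_r\coloneqq\{\ppp(B(rx,ra)\times(-\infty,r^2])=0\}$. We want $\p(E_r)\to0$ as $r\to\infty$. Monotonicity in $r$ fails, since the centre $rx$ moves with $r$, so we first remove the moving centre using stationarity. The event $E_r$ depends only on $\ppp$ restricted to $B(rx,ra)\times\R$. By stationarity, $T_{-rx}(\ppp)\overset{D}{=}\ppp$, and therefore
\begin{equation*}
\p(E_r)=\p\big(\ppp(B(0,ra)\times(-\infty,r^2])=0\big).
\end{equation*}
After this reduction, the ball and the mark window are both centred at the origin and grow with $r$. The point $x$ no longer plays any role.

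The events $F_r\coloneqq\{\ppp(B(0,ra)\times(-\infty,r^2])=0\}$ are decreasing in $r$, because both $B(0,ra)$ and $(-\infty,r^2]$ increase. By continuity from above,
\begin{equation*}
\lim_{r\to\infty}\p(F_r)=\p\Big(\bigcap_{r}F_r\Big)=\p\big(\ppp(\rd\times\R)=0\big).
\end{equation*}
To justify the equality with the last event, note that $\bigcup_r B(0,ra)\times(-\infty,r^2]=\rd\times\R$. Hence if $\ppp$ has any point $(y,t)$, that point lies in $B(0,ra)\times(-\infty,r^2]$ for all large $r$, so $F_r$ eventually fails. By assumption $\ppp$ is a.s.\ non-empty, so the last probability is $0$, which proves the lemma.

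There is no real obstacle here. The only point needing care is that continuity from above requires a nested family of events, which is why the stationarity shift must come first. The limit can be taken along $r\in\n$ and then extended to real $r$ by monotonicity of $\p(F_r)$. Measurability of $F_r$ is immediate, since $\varphi\mapsto\varphi(B)$ is measurable for Borel sets $B$.
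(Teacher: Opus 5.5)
Your proof is correct and follows essentially the same route as the paper: use stationarity to replace $B(rx,ra)$ by $B(0,ra)$, then let the origin-centred window exhaust $\rd\times\R$, so the limit is $\p(\ppp(\rd\times\R)=0)=0$. The paper treats $x=0$ first and then invokes stationarity, whereas you shift first. You also make explicit the monotonicity and continuity-from-above step that the paper leaves implicit.
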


\begin{proof}First assume that $x = 0$. Then 
\begin{align*}
    \lim_{r \rightarrow \infty}\p(\ppp(B(0,ra) \times (-\infty,r^2])=0) = \p(\ppp(\rd \times \R)=0) = 0. 
\end{align*}
    For general $x \in \rd$, we get $\p(\ppp(B(rx,ra) \times (-\infty,r^2])=0) = \p(\ppp(B(0,ra) \times (-\infty,r^2])=0)$ for any $r > 0$ from the stationarity of $\ppp$, which finishes the proof. 
\end{proof}

This allows us to find suitable constants $K_1,K_2,K_3, r_0 \geq 1$ for which the probabilities of the three events \eqref{def:NotationSetsE} are arbitrarily small.

\begin{lemma} \label{lemma:ProbOfErEvents}
    Let $\varepsilon > 0$ and assume that $\ppp$ is a stationary marked point process on $\rd \times \R$ such that $\p(\hat{\ppp} \in \Msettemp) = 1$. Then there exist constants $K_1,K_2,K_3, r_0 \geq 1$ such that for all $r \geq r_0$,
    \begin{equation*} 
        \max \{ \p(\ppp \in E^1_{r,K_1}),\p(\ppp \in E^2_{r,K_2}),\p(\ppp \in E^3_{r,K_3})\} < \varepsilon.
    \end{equation*}
\end{lemma}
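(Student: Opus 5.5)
Since $\hatMsetl$ increases in $l$ and $\p(\hat{\ppp}\in\Msettemp)=1$, continuity of measure gives $\p(\ppp\in\hatMsetl)\to 1$ as $l\to\infty$. The plan is to fix $l\in\n$ so large that $\p(\ppp\notin\hatMsetl)<\varepsilon/2$. We also take $l$ large enough that $c_l>0$, as required in Lemma \ref{LBlemma:BoundErHat}. With this $l$ fixed we choose
\[
K_1\coloneqq \max\{K_1(l),1\}\geq 2k(l)+3,\qquad K_2\coloneqq K_2(l),\qquad K_3\coloneqq K_3(l)=\sqrt{8}C_l,
\]
where $C_l$ is built from this $K_1$. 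All constants $c_l,\alpha(l),\beta_l,\theta_l$ and the balls $B_j$ and cones $A_i$ are then fixed and do not depend on $r$. Each probability then splits as
\[
\p(\ppp\in E^i_{r,K_i})\leq \p(\ppp\notin\hatMsetl)+\p(\ppp\in\hatMsetl\cap E^i_{r,K_i}),
\]
so it suffices to make the second term smaller than $\varepsilon/2$ for all $r\geq r_0$.

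For $E^1$, Lemma \ref{LBlemma:BoundErHat} gives $\p(\ppp\in\hatMsetl\cap E^1_{r,K_1})\leq \p(\ppp(rc_lB^d\times(-\infty,r^2])=0)$. By Lemma \ref{lemma:EmptySetLimit} with $x=0$ and $a=c_l$, this tends to $0$ as $r\to\infty$. Lemma \ref{lemma:EmptySetLimit} applies because $\ppp$ is a.s.\ non-empty (it is non-degenerate and stationary).

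For $E^3$, intersect with $E^1_{r,K_1}$ and its complement:
\[
\p(\hatMsetl\cap E^3_{r,K_3})\leq \p(\hatMsetl\cap E^1_{r,K_1})+\p(E^3_{r,K_3}\cap\hatMsetl\cap(E^1_{r,K_1})^c).
\]
The first term is handled above. For the second, Lemma \ref{lemma:boundEr} and a union bound give at most $\theta_l+1$ vacancy probabilities of the form $\p(\ppp(B(rx,ra)\times(-\infty,r^2])=0)$. Here $a=\beta_l$ and the centres are $x=0$ or the centres of the $B_j$ (with the $B_j$ scaled to radius $\beta_l$ as in the lemma). Each of these tends to $0$ by Lemma \ref{lemma:EmptySetLimit}. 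For this we need $\beta_l>0$, which holds because $C_l\geq 2K_1\geq 6$.

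The main obstacle is $E^2$. Lemma \ref{LBlemma:BoundErTilde} produces an empty region $A_i\cap B(0,3r,\alpha(l)r)$, which is a truncated cone rather than a ball, so Lemma \ref{lemma:EmptySetLimit} does not apply directly. First I would use a union bound over $i\in\{1,\dots,J_d\}$. For each $i$, the set $A_i\cap B(0,3,\alpha(l))$ should contain a ball $B(x_i,a_i)$ with $a_i>0$: since $A_i$ is a closed cone with non-empty interior (any cone in the partition without interior can be merged into a neighbour), pick a unit vector $u_i$ in its interior and a small radius. Scaling by $r$ then gives $B(rx_i,ra_i)\subseteq A_i\cap B(0,3r,\alpha(l)r)$. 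If the annulus $3<\norm{x}<\alpha(l)$ is too thin for a ball of this kind, which can happen only if $\alpha(l)$ is close to $3$, I would enlarge $K_2$ beforehand, since $\alpha(l)$ grows with $K_2$. Each such vacancy probability tends to $0$ by Lemma \ref{lemma:EmptySetLimit}.

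Finally, since there are only finitely many vacancy terms, choose $r_0\geq 1$ such that all of them are below $\varepsilon/(2(\theta_l+J_d+3))$ for every $r\geq r_0$. This gives the claimed bound for all three events simultaneously.
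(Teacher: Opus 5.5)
Your proposal is correct and follows the paper's proof essentially step by step. You fix $l$ so that $\p(\ppp\in\hatMsetl)$ is close to $1$ and $c_l>0$, take $K_1,K_2,K_3$ from Lemmas \ref{LBlemma:BoundErHat}, \ref{LBlemma:BoundErTilde} and \ref{lemma:boundEr}, reduce each event to finitely many vacancy probabilities (splitting $E^3$ along $E^1_{r,K_1}$, and inscribing balls in the truncated cones for $E^2$), and conclude with Lemma \ref{lemma:EmptySetLimit}. The differences are cosmetic: you use an $\varepsilon/2$ split with a common threshold instead of the paper's $\varepsilon/3$ bookkeeping, and you justify explicitly that the cones have non-empty interior, which the paper leaves implicit. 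That justification is cleanest by simply discarding interior-free cones, since the remaining closed cones still cover $\rd$. Merging them into a neighbour, as you suggest, could violate the condition $\dotp{u}{v}\geq\frac12\norm{u}\norm{v}$.
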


\begin{proof}
    Let $\varepsilon > 0$. Then we can find $l_0 \in \n$ such that for any $l \geq l_0$ we get $\p(\ppp\in \hatMsetl)\geq 1-\frac{\varepsilon}{3}$. Recall the notation $c_l$ defined in Lemma \ref{LBlemma:BoundErHat} and fix $l \geq l_0$ large enough so that $c_l > 0$. Choose $K_1 \geq 2k(l)+3$, then by Lemma \ref{LBlemma:BoundErHat}, we get for any $r \geq 1$,
    \begin{equation*}
        \p(\ppp \in E^1_{r,K_1}) \leq \p(\ppp \in E^1_{r,K_1}\cap \hatMsetl) + \frac{\varepsilon}{3} \leq \p(\ppp(B(0,rc_l)\times(-\infty,r^2])=0) + \frac{\varepsilon}{3}.
    \end{equation*}
    Using Lemma \ref{lemma:EmptySetLimit}, there exists $r_1$ such that for any $r \geq r_1$ we have $\p(\ppp(B(0,rc_l)\times(-\infty,r^2])=0) < \frac{\varepsilon}{3}$.  
    
Furthermore, put $K_3\coloneqq K_3(l)$ as defined in Lemma \ref{lemma:boundEr} with our fixed $K_1$ as $K_1(l)$. Then by Lemma \ref{lemma:boundEr} for any $r \geq r_1$,
\begin{align*}
    \p(\ppp \in E^3_{r,K_3}) &\leq \p(\ppp \in E^3_{r,K_3}\cap \hatMsetl \cap (E^1_{r,K_1})^c) + \p(\ppp \in E^1_{r,K_1}\cap \hatMsetl)+ \frac{\varepsilon}{3} \\
    &\leq \p(\ppp(B(0,\beta_l r)\times(-\infty,r^2])=0) + \sum_{i=1}^{\theta_l} \p(\ppp(B(\beta_l r x_i,\beta_l r)\times(-\infty,r^2])=0) + \frac{2\varepsilon}{3},
\end{align*}
where $\beta_l$, $\theta_l$ are defined in Lemma \ref{lemma:boundEr} and $x_i$ are suitable points in $\rd$. Again, using Lemma \ref{lemma:EmptySetLimit}, there exists $r_3 \geq 1$ such that for any $r \geq r_3$ we have 
\begin{align*}
    \p(\ppp(B(0,\beta_l r)\times(-\infty,r^2])=0) + \sum_{i=1}^{\theta_l} \p(\ppp(B(\beta_l r x_i,\beta_l r)\times(-\infty,r^2])=0) < \frac{\varepsilon}{3}. 
\end{align*}
To bound the last probability, recall the notation of Lemma \ref{LBlemma:BoundErTilde}. Fix $K_2 > 4l^{\frac{2}{d+\delta}} + 10$ for our fixed $l$ and realize that there exist $z_1\dots,z_{J_d} \in \rd$ and $a> 0$ such that $B(r z_i, r a) \subseteq A_i \cap B(0,3r,\alpha(l)r)$, where $A_i$ and $\alpha(l)$ are from Lemma \ref{LBlemma:BoundErTilde}. Therefore, for any $r \geq 1$, 
\begin{align*}
    \p(\ppp \in E^2_{r,K_2}) \leq \p(\ppp \in E^2_{r,K_2}\cap \hatMsetl) + \frac{\varepsilon}{3} \leq \sum_{i = 1}^{J_d} \p (\ppp(B(r z_i, r a) \times (-\infty,r^2]) = 0) + \frac{\varepsilon}{3}.
\end{align*}
Again Lemma \ref{lemma:EmptySetLimit} implies that there exists $r_2\geq 1$ such that for all $r \geq r_2$, 
\begin{equation*}
    \sum_{i = 1}^{J_d} \p (\ppp(B(r z_i, r a) \times (-\infty,r^2]) = 0) < \frac{2\varepsilon}{3}. 
\end{equation*} Putting $r_0 \coloneqq \max \{r_1,r_2,r_3\}$ finishes the proof.     
\end{proof}

Let $\varphi \subseteq\rd \times \R$ be such that $L(\varphi)$ is a tessellation. Define the set of neighbours of the point $\bfx\in \varphi$ in $L(\varphi)$ as 
\begin{equation*}
\mathbf{N}(\bfx, \varphi) \coloneqq \{{\bf y} \in \varphi\setminus \{\bfx\}: L(\bfx, \varphi) \cap L({\bf y}, \varphi)\neq \emptyset\}.    
\end{equation*}

Next, we show that for a suitable subset of generators, it is enough to know the generators close to or far away from the origin to be able to reconstruct cells close to (see Lemma \ref{lemma:NeighborsA}) or far away (see Lemma \ref{lemma:NeighborsB}) from the origin. 

\begin{lemma} \label{lemma:NeighborsA}
    Let $a > 0$ and denote $\Hat{\Theta}_{r,K_3,K_1} \coloneqq (E^3_{r,K_3} \cup E^1_{(K_3+1)r,K_1})^c$ for $r \geq \max \{a,1\}$ and $K_1,K_3  > 0$. Then for any $\mathsf{A} \in \tailF{a}^1$ we have
    \begin{align} \label{BoundForNeighborsA}
        \Hat{\Theta}_{r,K_3,K_1} \cap \{\varphi: L(\varphi) \in \mathsf{A}\} = \Hat{\Theta}_{r,K_3,K_1} \cap \{\varphi: L(\varphi \cap (K_1(K_3+1)rB^d \times \R)) \in \mathsf{A}\}.
    \end{align}
\end{lemma}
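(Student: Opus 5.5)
Write $R\coloneqq K_1(K_3+1)r$ and $\psi\coloneqq\varphi\cap(RB^d\times\R)$. The plan is to show that for $\varphi\in\Hat{\Theta}_{r,K_3,K_1}$ every cell of $L(\varphi)$ and every cell of $L(\psi)$ that meets $aB^d$ is contained in $(K_3+1)rB^d$, and that these cells coincide as sets. Membership of a tessellation in a set of $\tailFnula{a}$, and hence in a finite disjoint union of such sets, i.e.\ in any $\mathsf{A}\in\tailF{a}^1$, depends only on the collection of cells hitting $aB^d$. Indeed, each $C_i\subseteq\mathcal{F}_{aB^d}$, so the conditions $Z\cap C_i=\emptyset$ or $\neq\emptyset$ only involve elements of $Z$ that hit $aB^d\subseteq rB^d$. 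So it suffices to prove that
\begin{equation*}
\{L\in L(\varphi): L\cap aB^d\neq\emptyset\}=\{L\in L(\psi): L\cap aB^d\neq\emptyset\}.
\end{equation*}
This yields equality of the two intersections in \eqref{BoundForNeighborsA}.

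\emph{Step 1 (cells of $\varphi$ are cells of $\psi$).} Let $L=L(\bfx,\varphi)\in L(\varphi)$ meet $aB^d\subseteq rB^d$.
\begin{itemize}
\item[(i)] Since $\varphi\notin E^3_{r,K_3}$, we get $L\subseteq(K_3+1)rB^d$.
\item[(ii)] Since $L$ meets $(K_3+1)rB^d$ and $\varphi\notin E^1_{(K_3+1)r,K_1}$, the nucleus satisfies $\bfx\in RB^d\times\R$, so $\bfx\in\psi$.
\item[(iii)] Removing generators only enlarges cells, so $L(\bfx,\varphi)\subseteq L(\bfx,\psi)$.
\end{itemize}
For the reverse inclusion, suppose $z\in L(\bfx,\psi)\setminus L(\bfx,\varphi)$. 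Then some generator $\bfy\in\varphi\setminus\psi$ satisfies $\pd(z,\bfy)<\pd(z,\bfx)$. Join a point $w\in\interior L\cap rB^d$ to $z$ by a segment. Along it, the cell containing the current point of $L(\varphi)$ must change, so the segment exits $L$ at some boundary point $u\in L$. Near $u$, points beyond $u$ lie in other cells of $L(\varphi)$; these cells meet $L\cap\ldots$ in the sense that they touch $u\in(K_3+1)rB^d$.

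This argument is awkward, and I would replace it by a cleaner one. The intersection $L(\bfx,\psi)$ is convex and contains $L$. Suppose $L(\bfx,\psi)\neq L$. Pick $z\in L(\bfx,\psi)\setminus L$ close to $\partial L$, so that $z$ lies in the interior of some neighbouring cell $L'$ of $L(\varphi)$, with $L'\cap L\neq\emptyset$ at a point of $(K_3+1)rB^d$. Then, by the definition of $E^1_{(K_3+1)r,K_1}$, the nucleus of $L'$ lies in $RB^d$, so it belongs to $\psi$. Its power distance at $z$ is strictly smaller than $\pd(z,\bfx)$, contradicting $z\in L(\bfx,\psi)$. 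Here I use that the finitely many neighbours of $L$ cover a neighbourhood of $L$ (local finiteness from Proposition \ref{prop:Tessellation}, using Lemma \ref{lemma:ExistenceLagTess}-type regularity). I also use that $L(\bfx,\psi)\setminus L$ nonempty implies points of it arbitrarily close to $L$, by convexity. Thus the neighbour set $\mathbf{N}(\bfx,\varphi)\subseteq\psi$, and $L(\bfx,\varphi)=L(\bfx,\psi)$ has nonempty interior.

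\emph{Step 2 (cells of $\psi$ are cells of $\varphi$).} Conversely, let $L(\bfx,\psi)$, with $\bfx\in\psi$, have nonempty interior and meet $aB^d$ at a point $w$. Since $L(\varphi)$ is a tessellation, $w$ lies in some cell $L(\bfx',\varphi)$ meeting $aB^d$. By Step 1, $\bfx'\in\psi$ and $L(\bfx',\varphi)=L(\bfx',\psi)$. Cells of $L(\psi)$ with nonempty interior at a common interior point are equal, and we can choose $w$ to be a generic interior point. Hence $L(\bfx,\psi)$ equals some cell of $L(\varphi)$.

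The main obstacle is the boundary/neighbour argument in Step 1: $L(\psi)$ need not be a tessellation globally, so I would argue only locally, inside the compact $(K_3+1)rB^d$. I would also have to handle lower-dimensional degenerate cells, which is why the diagram only keeps cells with nonempty interior.
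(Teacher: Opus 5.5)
Your overall approach is the paper's, and your reduction to showing that $L(\varphi)$ and $L(\psi)$ have the same cells meeting $aB^d$ is correct. Step 1 uses $(E^3_{r,K_3})^c$ to confine a cell meeting $rB^d$ to $(K_3+1)rB^d$. It then uses $(E^1_{(K_3+1)r,K_1})^c$ to place its nucleus, and the nuclei of all cells touching it, in $K_1(K_3+1)rB^d$.

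You differ from the paper in two places, both to your credit:
\begin{itemize}
\item The paper simply asserts that confining the neighbours gives $L(\bfx,\varphi)=L(\bfx,\psi)$. This implicitly uses that a cell is cut out by its neighbours' half-spaces. It also applies $E^1$ to all of $\mathbf{N}(\bfx,\varphi)$, which may contain generators whose cells have empty interior, and $E^1$ says nothing about those. Your convexity-plus-local-finiteness argument proves the equality directly and applies $E^1$ only to genuine cells of $L(\varphi)$. Note that $z\in L'$ suffices there; you do not need $z\in\interior L'$.
\item Your Step 2 treats the reverse inclusion, that cells of $L(\psi)$ meeting $aB^d$ are cells of $L(\varphi)$. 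The paper's proof does not argue this, yet it is needed for the avoidance condition $L(\cdot)\cap C_0=\emptyset$.
\end{itemize}

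The one step that fails as written is in Step 2. You cannot in general choose $w\in aB^d$ to be an interior point of $L(\bfx,\psi)$. That cell may meet $aB^d$ only in boundary points, for instance tangentially at a single point, and this can happen even when $a=r$. In that case the cell $L(\bfx',\varphi)$ containing $w$ might be a different cell that merely touches $L(\bfx,\psi)$ at $w$.

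You can repair this with the local finiteness you already use in Step 1:
\begin{itemize}
\item Take a small neighbourhood $U$ of $w$ covered by the finitely many cells of $L(\varphi)$ that contain $w$. Each of these meets $aB^d$ (at $w$), so by Step 1 each equals $L(\bfx_j,\psi)$ for some $\bfx_j\in\psi$.
\item Since $L(\bfx,\psi)$ is convex with non-empty interior, $w\in\closure\interior L(\bfx,\psi)$. Hence $U\cap\interior L(\bfx,\psi)$ is a non-empty open set covered by finitely many closed cells, so it meets $\interior L(\bfx_j,\psi)$ for some $j$.
\item Because $\psi$ is simple, distinct generators have distinct nuclei, so their cells intersect only inside a hyperplane. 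Therefore $\bfx=\bfx_j$ and $L(\bfx,\psi)\in L(\varphi)$.
\end{itemize}

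Two smaller points. Your conclusion $\mathbf{N}(\bfx,\varphi)\subseteq\psi$ is more than you proved, but you never use it. In a write-up, drop the abandoned first attempt in Step 1.
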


\begin{proof}
    It is enough to show the equality for sets $\mathsf{A}$ from the generating semi-algebra $\tailFnula{a}$, \ie for $\mathsf{A} = \mathcal{F}^{C_0}_{C_1,\dots,C_k}$ where $k \in \n$ and $C_0,\dots,C_k \in \mathcal{C}(\FdAp)$, $C_i \subseteq \mathcal{F}_{aB^d}$, $i = 0,\dots,k$. For such $\mathsf{A}$ we have
    \begin{equation*}
        \{\varphi: L(\varphi) \in \mathsf{A}\} = \{\varphi: L(\varphi) \cap C_0 = \emptyset\} \cap \bigcap_{i = 1}^k \{\varphi: L(\varphi) \cap C_i \neq \emptyset\}.
    \end{equation*}
   Therefore, to show \eqref{BoundForNeighborsA}, it is enough to show that for $\varphi \in \Hat{\Theta}_{r,K_3,K_1}$ and $C\in \mathcal{C}(\FdAp)$, $C \subseteq \mathcal{F}_{aB^d}$, 
    \begin{equation} \label{equalityofcells}
        L(\varphi) \cap C =  L(\varphi \cap (K_1(K_3+1)rB^d \times \R)) \cap C.
    \end{equation}
    
     Take $\varphi \in \Hat{\Theta}_{r,K_3,K_1}$ and let $L \in L(\varphi)\cap C$. Then, thanks to the choice of $r$, we have $L \cap rB^d \neq \emptyset$. It follows that $L \subseteq (K_3+1)rB^d$ and $(x_L,m_L) \in K_1(K_3+1)rB^d \times \R$ from the definition of sets $E^1_{\cdot,\cdot}$ and $E^3_{\cdot,\cdot}$. This implies that any $\bfy \in \mathbf{N}((x_L,m_L),\varphi)$ satisfies $L(\bfy, \varphi) \cap (K_3+1)rB^d \neq \emptyset$ and therefore $\mathbf{N}((x_L,m_L),\varphi) \subseteq\varphi \cap (K_1(K_3+1)rB^d \times \R)$. This implies that for any $\bfx \in \varphi$ such that $L(\bfx,\varphi)\in L(\varphi) \cap C$, we have
     \begin{equation*}
         L(\bfx,\varphi) = L(\bfx,\varphi \cap (K_1(K_3+1)rB^d \times \R) ),
     \end{equation*}
which finishes the proof of \eqref{equalityofcells} and consequently of \eqref{BoundForNeighborsA}. 
\end{proof}

\begin{lemma} \label{lemma:NeighborsB}
    Let $b > 0$ and let $\mathsf{B} \in \tailF{-b}^1$. Let $K_3,K_2 > 0$ and $r \geq 1$ be such that $b \geq (K_3+1)K_2r$ and denote $\Tilde{\Theta}_{r,K_3,K_2}\coloneqq (E^3_{K_2r,K_3} \cup E^2_{r,K_2})^c$. Then 
    \begin{equation} \label{BoundForNeighborsB}
        \Tilde{\Theta}_{r,K_3,K_2} \cap \{\varphi: L(\varphi) \in \mathsf{B}\} = \Tilde{\Theta}_{r,K_3,K_2} \cap \{\varphi: L(\varphi \cap ((rB^d)^c \times \R)) \in \mathsf{B}\}.
    \end{equation}
\end{lemma}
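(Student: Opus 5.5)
The argument mirrors Lemma \ref{lemma:NeighborsA}, with the roles of ``near'' and ``far'' swapped. As there, it suffices to take $\mathsf{B}=\mathcal{F}^{C_0}_{C_1,\dots,C_k}$ from the generating semi-algebra $\tailFnula{-b}$. Since the set of $\varphi$ with $L(\varphi)\in\mathsf{B}$ is an intersection of the events $\{L(\varphi)\cap C_0=\emptyset\}$ and $\{L(\varphi)\cap C_i\neq\emptyset\}$, and finite disjoint unions are preserved, it is enough to show the following. For $\varphi\in\Tilde{\Theta}_{r,K_3,K_2}$ and every compact $C\subseteq\mathcal{F}_{\rd\setminus bB^d}$ we want
\begin{equation*}
L(\varphi)\cap C = L(\varphi\cap((rB^d)^c\times\R))\cap C .
\end{equation*}
Write $\psi:=\varphi\cap((rB^d)^c\times\R)$.

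\textbf{Step 1: cells seen by $C$ keep away from the small ball.} Take $L\in L(\varphi)\cap C$, so $L$ meets $(bB^d)^c\subseteq ((K_3+1)K_2rB^d)^c$. Since $\varphi\notin E^2_{r,K_2}$, every cell meeting $(K_2rB^d)^c$ has its generator outside $rB^d\times\R$, so $(x_L,m_L)\in\psi$. Next, since $\varphi\notin E^3_{K_2r,K_3}$, any cell meeting $K_2rB^d$ is contained in $(K_3+1)K_2rB^d$. The cell $L$ reaches beyond that ball, so $L\cap K_2rB^d=\emptyset$.

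The same reasoning applies to every neighbour $\bfy\in\mathbf{N}((x_L,m_L),\varphi)$. If $L(\bfy,\varphi)$ meets $K_2rB^d$, then it lies inside $(K_3+1)K_2rB^d$, and it meets $L$ there. To get a contradiction from this, I would rather not chase neighbours and instead argue directly about the cell equality in Step 2.

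\textbf{Step 2: cell equality.} For $\bfx=(x_L,m_L)$ we have $L(\bfx,\varphi)\subseteq L(\bfx,\psi)$ trivially. For the reverse inclusion, suppose $z\in L(\bfx,\psi)\setminus L(\bfx,\varphi)$. Then some removed generator $\bfy\in\varphi$ with nucleus in $rB^d$ beats $\bfx$ at $z$. Consider the segment from a point $w\in L(\bfx,\varphi)$ to $z$. Inside $L(\bfx,\psi)$, which is convex, the minimiser over $\varphi$ changes along the segment. By local finiteness (R1) there is a first point $u$ where $\bfx$ ties with a generator $\bfy'\in\varphi\setminus\psi$. Then $u\in L(\bfy',\varphi)$, since $\bfy'$ is a $\varphi$-minimiser at $u$.

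The cell $L(\bfy',\varphi)$ has its nucleus in $rB^d$, so it is contained in $K_2rB^d$: otherwise it would meet $(K_2rB^d)^c$ and witness $E^2_{r,K_2}$. Hence $u\in K_2rB^d\cap L(\bfx,\varphi)$, contradicting Step 1. Thus $L(\bfx,\varphi)=L(\bfx,\psi)$. A similar argument is needed if $L(\bfy',\varphi)$ has empty interior; such a cell is still a set of $\varphi$-minimisers, so the same containment applies.

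\textbf{Step 3: the converse direction and the main obstacle.} Every cell of $L(\psi)$ meeting $(bB^d)^c$ must also be a cell of $L(\varphi)$. I expect this to be the main obstacle, because $\psi$ itself need not lie in $\Tilde{\Theta}$. A cell $L'=L(\bfx,\psi)$ meeting $(bB^d)^c$ at a point $z$ falls into two cases.

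If $z\in L(\bfx,\varphi)$, then Step 1 applies to the $\varphi$-cell of $\bfx$, and Step 2 gives $L(\bfx,\varphi)=L'$.

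Otherwise the $\varphi$-minimiser at $z$ is some removed generator $\bfy$ with nucleus in $rB^d$. Its cell contains $z\notin K_2rB^d$, which contradicts $\varphi\notin E^2_{r,K_2}$. One caveat: $z$ may lie only on a lower-dimensional piece. I would handle this by choosing $z$ in the interior of $L'$ near the boundary, using openness of $(bB^d)^c$. The interiors are then non-empty on both sides, so the two cell sets coincide on $C$, which proves \eqref{BoundForNeighborsB}.
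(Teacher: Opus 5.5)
Your argument is correct in its main line, but the key step takes a different route from the paper's. The paper obtains $L\subseteq (K_2rB^d)^c$ and $\bfx_L\in\psi$ exactly as in your Step 1. It then notes that every neighbour $\bfy\in\mathbf{N}(\bfx_L,\varphi)$ has a cell touching $L$, hence meeting $(K_2rB^d)^c$, hence $\bfy\in\psi$ by $(E^2_{r,K_2})^c$. From this it concludes $L(\bfx_L,\varphi)=L(\bfx_L,\psi)$, tacitly using that a Laguerre cell is cut out by its neighbours' half-spaces alone. It also only argues that cells of $L(\varphi)$ lying in $C$ survive, leaving the converse inclusion implicit. Your segment argument inside the convex set $L(\bfx,\psi)$ replaces the unstated ``neighbours determine the cell'' principle with a direct proof of exactly what is needed. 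Your Step 3 supplies the reverse inclusion the paper omits. The price is a longer case analysis.

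One step needs repair. Your claim that a cell $L(\bfy',\varphi)$ with empty interior obeys ``the same containment'' is false. $(E^2_{r,K_2})^c$ only constrains members of $L(\varphi)$, i.e.\ full-dimensional cells. A generator with nucleus in $rB^d$ can have a lower-dimensional cell reaching arbitrarily far: for instance $(0,R^2)$ ties with $(\pm Re_1,0)$ exactly on $\{z_1=0\}$ and never wins strictly. Choosing $u$ as the \emph{first tie} can land on such a tie, e.g.\ when $w$ lies on that facet.

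The fix is to use the covering property of $L(\varphi)$. Let $u$ be the far endpoint of the closed subsegment $[w,z]\cap L(\bfx,\varphi)$. Every $p$ in $(u,z]$ lies in $L(\bfx,\psi)\setminus L(\bfx,\varphi)$, so all $\varphi$-minimisers at $p$ are removed generators. Since $L(\varphi)$ covers $\rd$, $p$ lies in a full-dimensional cell whose generator is removed. By $(E^2_{r,K_2})^c$ this gives $p\in K_2rB^d$. Letting $p\to u$ and using closedness yields $u\in K_2rB^d\cap L(\bfx,\varphi)$, contradicting Step 1.

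The same device fixes Step 3, which as written also applies $E^2$ to a possibly degenerate cell. Take $z\in\interior L'\setminus bB^d$. The full-dimensional cell of $L(\varphi)$ containing $z$ has its generator in $\psi$, so that generator is a $\psi$-minimiser at $z$. At interior points of $L(\bfx,\psi)$ the $\psi$-minimiser is unique by simplicity, so the generator equals $\bfx$. Hence $L(\bfx,\varphi)\in L(\varphi)$, and Steps 1--2 give $L(\bfx,\varphi)=L'$.

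The paper's neighbour step has the same blind spot: $\mathbf{N}(\bfx_L,\varphi)$ may contain removed generators with degenerate cells, for which $E^2$ says nothing.
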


\begin{proof}
     Again, it is enough to consider $\mathsf{B}$ from the generating semi-algebra $\tailFnula{-b}$. Let $k \in \n$ and the sets $C_0,\dots,C_k \in \mathcal{C}(\FdAp)$, $C_i \subseteq \mathcal{F}_{(bB^d)^c}$, $i = 0,\dots,k$,  be such that $\mathsf{B} = \mathcal{F}^{C_0}_{C_1,\dots,C_k}$. As in Lemma \ref{lemma:NeighborsA}, to prove \eqref{BoundForNeighborsB} it suffices to show that for any $\varphi \in \Tilde{\Theta}_{r,K_3,K_2}$ and $C \in \mathcal{C}(\FdAp)$, $C \subseteq \mathcal{F}_{(bB^d)^c}$, we have
\begin{equation} \label{equalityofcellsB}
           L(\varphi) \cap C =  L(\varphi \cap ((rB^d)^c \times \R)) \cap C.
    \end{equation}

Take $\varphi \in \Tilde{\Theta}_{r,K_3,K_2}$ and $L \in L(\varphi) \cap C$, then $L \cap (\rd \setminus (K_3+1)K_2rB^d) \neq \emptyset$. This, together with $\varphi \in (E^3_{K_2r,K_3})^c$, implies that $L \subseteq \rd \setminus K_2rB^d$. Since $\varphi \in (E^2_{r,K_2})^c$, it follows that $\bfx_L \in (rB^d)^c \times \R$. Furthermore, for all $\bfy \in \mathbf{N}(\bfx_L,\varphi)$ we get $L(\bfy,\varphi) \cap (K_2rB^d)^c \neq \emptyset$ and therefore also $\bfy \in (rB^d)^c \times \R$. 

Altogether, we have shown that $L(\bfx,\varphi) =  L(\bfx, \varphi \cap ((rB^d)^c \times \R))$ holds for any $\bfx \in \varphi$ such that $L(\bfx, \varphi) \in L(\varphi) \cap C$, which finishes the proof of \eqref{equalityofcellsB}. 
\end{proof}

Now we are ready to prove our main result. The main idea is to use Lemmas \ref{lemma:ProbOfErEvents}, \ref{lemma:NeighborsA} and \ref{lemma:NeighborsB} to approximate the probability $\p(L(\ppp) \in \mathsf{A})$ by $\p(L(\ppp \cap (RB^d \times \R)) \in \mathsf{A})$ for some suitable constant $R$ for any $\mathsf{A} \in  \tailF{a}^1$ and similarly for $\mathsf{B} \in  \tailF{-b}^1$. This allows us to control the $\alpha$-mixing coefficient of $L(\ppp)$ by the $\alpha$-mixing coefficient of $\ppp$ up to some $\varepsilon$ term, which can be made arbitrarily small. 

\begin{proof}[Proof of Theorem \ref{thm:Main} (c)]First, thanks to part (a) of Theorem \ref{thm:Main}, $L(\ppp)$ is \as a tessellation of $\rd$. Our aim is to prove that $\lim_{b \rightarrow \infty} \alpha(\sigma_X(\tailFnula{a}),\sigma_X(\tailFnula{-b})) = 0$ holds for any $a > 0$.
    
Fix $a > 0$ and let $\varepsilon > 0$. Recall the notation $\hat{\ppp}$ from \eqref{def:hattrans}. By the same argument as in the proof of part (a), $\p(\hat{\ppp} \in \Msettemp)  = 1$. Let $K_1,K_2,K_3 \geq 1$ and $r_0 \geq 1$ be the constants from  Lemma \ref{lemma:ProbOfErEvents} and denote $K_{i,3}\coloneqq K_i(K_3+1)$, $i = 1,2$, then 
\begin{itemize}
    \item[(i)] for $a_0 \coloneqq \max \{r_0,a\}$ we get $\p\big(\ppp \in \Hat{\Theta}^c_{a_0,K_3,K_1}\big) = \p\big(\ppp \in E^3_{a_0,K_3}\cup E^1_{(K_3+1)a_0,K_1}\big) < 2\varepsilon$,
    \item[(ii)] for $b \geq K_{2,3}r_0$ we get $ \p\big(\ppp \in \Tilde{\Theta}^c_{\frac{b}{K_{2,3}},K_3,K_2}\big) = \p\big(\ppp \in E^3_{\frac{b}{K_3+1},K_3}\cup E^2_{\frac{b}{K_{2,3}},K_2}\big) < 2\varepsilon$.
\end{itemize}

Take $\mathsf{A} \in \tailF{a}^1$. Let $b \geq K_{2,3}r_0$ and take $\mathsf{B} \in \tailF{-b}^1$. 
Using Lemmas \ref{lemma:NeighborsA} and \ref{lemma:NeighborsB}, we can bound
\begin{align*}
    &\abs{\p\big(L(\ppp) \in \mathsf{A}\big) - \p\big(L(\ppp \cap (K_{1,3}a_0B^d \times \R)) \in \mathsf{A}\big)}  \leq \p\big(\ppp \in \Hat{\Theta}^c_{a_0,K_3,K_1}\big), \\
    &\abs{\p\big(L(\ppp) \in \mathsf{B}\big) - \p\big(L(\ppp \cap ((\textstyle \frac{b}{K_{2,3}}B^d)^c \times \R)) \in \mathsf{B}\big)}  \leq \p\big(\ppp \in \Tilde{\Theta}^c_{\frac{b}{K_{2,3}},K_3,K_2}\big), \\
    &\abs{\p\big(L(\ppp) \in \mathsf{A}, L(\ppp) \in \mathsf{B}\big) - \p\big(L(\ppp \cap (K_{1,3}a_0B^d \times \R)) \in \mathsf{A}, L(\ppp \cap ((\textstyle \frac{b}{K_{2,3}}B^d)^c \times \R)) \in \mathsf{B}\big)} \\
    & \hspace{8cm}\leq \p\big(\ppp \in \Tilde{\Theta}^c_{\frac{b}{K_{2,3}},K_3,K_2} \cup \Hat{\Theta}^c_{a_0,K_3,K_1}\big).
\end{align*}
Together with (i), (ii) and Theorem \ref{thm:MeasurabilityOfL}, these bounds imply that for $b$ large enough
\begin{align*}
    &\abs{\p\big(L(\ppp) \in \mathsf{A}, L(\ppp) \in \mathsf{B}\big) - \p\big(L(\ppp)\in \mathsf{A}\big)\p\big(L(\ppp)\in \mathsf{B}\big)} \\
    &\hspace{1cm}\leq  \big\lvert \p\big(L(\ppp \cap (K_{1,3}a_0B^d \times \R)) \in \mathsf{A}, L(\ppp \cap ((\textstyle \frac{b}{K_{2,3}}B^d)^c \times \R)) \in \mathsf{B}\big) \\
    & \hspace{3cm}- \p\big(L(\ppp \cap (K_{1,3}a_0B^d \times \R)) \in \mathsf{A}\big)\p\big(L(\ppp \cap ((\textstyle \frac{b}{K_{2,3}}B^d)^c \times \R)) \in \mathsf{B}\big) \big\rvert + 8 \varepsilon \\
    &\hspace{1cm} \leq \alpha_\ppp \big(K_{1,3}a_0,\infty;\textstyle \frac{b}{K_{2,3}}-K_{1,3}a_0\big) + 8\varepsilon.
\end{align*}

Since the choice of $\mathsf{A}$ and $\mathsf{B}$ was arbitrary and thanks to Lemma \ref{lemma:alphamixingaltdef}, we get that for any $b$ large enough
\begin{equation*}
    \alpha(\sigma_X(\tailFnula{a}),\sigma_X(\tailFnula{-b})) \leq  \alpha_\ppp \big(K_{1,3}a_0,\infty;\textstyle \frac{b}{K_{2,3}}-K_{1,3}a_0\big) + 8\varepsilon.
\end{equation*}
Taking $\limsup_{b \rightarrow \infty}$ and $\lim_{\varepsilon \rightarrow 0}$ finishes the proof. 
\end{proof}

\section{Examples}\label{sec:Examples}

We conclude by discussing some examples of admissible Laguerre generators $\ppp$. First, Section \ref{subsec:PoissonEx} presents the Poisson--Laguerre tessellation, for which the moment assumption on the typical mark can be made optimal. We then briefly discuss several non-Poissonian examples in Section \ref{subsec:OtherEx}.

\subsection{Poisson--Laguerre tessellation} \label{subsec:PoissonEx}

Let $\ppp$ be a stationary marked Poisson point process (\ie a Poisson point process that is also marked) with the intensity measure $\gamma \lambda^d \otimes \markdist$, where $\gamma > 0$  is the intensity and $\markdist$ is a probability measure on $\R$. It is a well-known fact that $\ppp$ is stationary and ergodic and therefore it is enough to require that $M \sim \markdist$ satisfies \eqref{as:momentM_} for $L(\ppp)$ to be \as a tessellation, which is called Poisson--Laguerre tessellation.

However, in the Poisson case, we can in fact weaken the moment condition on the mark distribution, since assumption (R1) can be verified without the help of tempered configurations. This result is again in accordance with \cite[Theorem 4.1]{ar:LZ08} and we only briefly sketch the proof here.

\begin{lemma} \label{lemma:ExPoisLagTess}
    Let $\ppp$ be a marked Poisson point process on $\rd \times \R$ with intensity measure $\gamma \lambda^d \otimes \markdist$, where $\gamma > 0$  and $\markdist$ is a probability measure on $\R$. Then $L(\ppp)$ is \as a tessellation of $\rd$ if and only if the typical mark $M \sim \markdist$ satisfies 
    \begin{equation} \label{ass:FiniteDthMoment}
    \e M_-^\frac{d}{2}< \infty.
\end{equation}

\end{lemma}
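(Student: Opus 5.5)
The plan is to verify the regularity conditions (R1) and (R2) directly and then invoke Proposition \ref{prop:Tessellation}. For the converse, we show that (R1) fails almost surely when \eqref{ass:FiniteDthMoment} is violated, and that this failure prevents $L(\ppp)$ from being a tessellation.

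\emph{Condition (R2).} As in the proof of Lemma \ref{lemma:ExistenceLagTess}, $\operatorname{supp}(\ppp')$ is a stationary, almost surely non-empty random closed set. Hence $\operatorname{conv}(\ppp')=\rd$ almost surely, and this holds regardless of the moment condition.

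\emph{Sufficiency.} Fix $z\in\rd$ and $t\ge 0$. The number $N_{z,t}=\abs{\{(x,m)\in\ppp:\norm{x-z}^2+m\le t\}}$ is Poisson distributed with mean
\begin{equation*}
\gamma\int_\R \lambda^d\bigl(B(z,\sqrt{(t-m)_+})\bigr)\,\dx\markdist(m)=\gamma\kappa_d\,\e (t-M)_+^{d/2},
\end{equation*}
where $\kappa_d=\lambda^d(B^d)$. Since $(t-M)_+\le t+M_-$, this mean is finite under \eqref{ass:FiniteDthMoment}, so $N_{z,t}<\infty$ almost surely. The event in (R1) is monotone in $t$ and in $z$: if $\norm{z-z'}\le s$, then $\pd(z',\bfx)\le t$ implies $\pd(z,\bfx)\le t+2s\sqrt{t+s^2}+s^2$, roughly. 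Therefore it suffices to control countably many pairs $(z,t)\in\mathbb{Q}^d\times\n$. This gives (R1) almost surely, and Proposition \ref{prop:Tessellation} finishes this direction.

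\emph{Necessity.} Suppose $\e M_-^{d/2}=\infty$. Then $\e(t-M)_+^{d/2}=\infty$ for every $z$ and $t$, so $N_{0,0}$ is Poisson with infinite mean and hence equals $\infty$ almost surely. We must now show that this forces $L(\ppp)$ not to be a tessellation; this is the main obstacle. My approach is as follows. If infinitely many generators have $\pd(0,\bfx)\le 0$, then $\inf_{\bfx\in\ppp}\pd(0,\bfx)=-\infty$ almost surely. This should hold because the mean number with $\pd(0,\bfx)\le -s$ is $\gamma\kappa_d\e(-s-M)_+^{d/2}=\infty$ for every $s$. Consequently $\inf_{\bfx}\pd(z,\bfx)=-\infty$ for every $z$, and no point $z$ lies in any cell $L(\bfx,\ppp)$, since a minimiser would be required. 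Thus all cells are empty, $L(\ppp)=\emptyset$, and $\cup L(\ppp)\ne\rd$. The key steps to check are the infinite-mean computation for every $s$ and the passage from one $z$ to all $z$. The latter holds because $\pd(z,\bfx)\le\pd(0,\bfx)+2\norm{z}\norm{x}+\norm{z}^2$ and points with very negative marks can be chosen accordingly; concretely, one can show directly that $\p(\exists\bfx:\pd(z,\bfx)\le -s)=1$ for each rational $z$ and each $s$, then use the local Lipschitz bound on compacts. Alternatively, it suffices to exhibit a single point $z$ not covered by any cell.
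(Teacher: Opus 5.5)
There is a genuine gap in the sufficiency direction, at the step where you pass from countably many pairs $(z,t)$ to all $z\in\rd$. The inequality you invoke, that $\norm{z-z'}\le s$ and $\pd(z',\bfx)\le t$ imply $\pd(z,\bfx)\le t+2s\sqrt{t+s^2}+s^2$, is false for any right-hand side depending only on $t$ and $s$. Such a bound needs $\norm{x-z'}$ to be controlled by $t$ and $s$. But $\pd(z',\bfx)\le t$ only gives $\norm{x-z'}\le\sqrt{t-m}$, and the negative weights are unbounded, which is exactly the difficulty in this lemma. For instance, take $z=0$, $z'=se_1$, $x=(s+R)e_1$ and $m=t-R^2$. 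Then $\pd(z',\bfx)=t$ but $\pd(0,\bfx)=t+2sR+s^2\to\infty$ as $R\to\infty$. So finiteness of sublevel sets at a nearby rational point says nothing about generators escaping to infinity in the direction $z'-z$.

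You need an additional idea here, and either of two works:
\begin{itemize}
\item Argue as the paper does. For $n\in\n$ and $t\in\z$, the number of $(x,m)\in\ppp$ with $B(x,\sqrt{(t-m)_+})\cap B(0,n)\neq\emptyset$ is Poisson with mean $\gamma\kappa_d\,\e\bigl(n+\sqrt{(t-M)_+}\bigr)^d<\infty$. This count dominates $\abs{\{\bfx\in\ppp:\pd(y,\bfx)\le t\}}$ simultaneously for all $y\in B(0,n)$.
\item Use that $z\mapsto\pd(z,\bfx)-\norm{z}^2=\norm{x}^2+m-2\dotp{z}{x}$ is affine. If $z'\in\operatorname{conv}\{v_1,\dots,v_k\}$ with $v_i\in\z^d$, then $\pd(z',\bfx)\le t$ forces $\pd(v_i,\bfx)\le t-\norm{z'}^2+\norm{v_i}^2$ for some $i$. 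Hence finiteness at lattice points and integer levels does give (R1) at every $z'$.
\end{itemize}
With either fix, the rest of your sufficiency part is fine: the Poisson mean $\gamma\kappa_d\,\e(t-M)_+^{d/2}$, the bound $(t-M)_+\le t+M_-$, and (R2).

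Your necessity argument works once you commit to the fallback you mention at the end. For each $s\in\n$ the set $\{\bfx:\pd(0,\bfx)\le -s\}$ has infinite Poisson mean; intersecting over $s$ gives $\inf_{\bfx\in\ppp}\pd(0,\bfx)=-\infty$ a.s., so the origin lies in no cell. This is the paper's argument, except that the paper uses the void probability rather than an infinite-mean count. The route through ``all $z$'' via a local Lipschitz bound on compacts has the same defect as above and is unnecessary. Also, ``infinitely many generators with $\pd(0,\bfx)\le 0$'' does not by itself imply $\inf=-\infty$; it is the infinite mean at every level $-s$ that does.
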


\begin{proof}
   Assume that \eqref{ass:FiniteDthMoment} holds. According to Proposition \ref{prop:Tessellation}, it is enough to show that then $$\p(\ppp \text{ satisfies (R1), (R2)})=1.$$
   Assumption (R2) follows from the stationarity of $\ppp'$. To show that (R1) holds, realize that for any $t \in \z$ and $n \in \n$ the assumption \eqref{ass:FiniteDthMoment} implies $\e \sum_{(x,m)\in \ppp }\ind \{B(0,n)\cap B(x,\sqrt{(t-m)_{+}}) \neq \emptyset\} < \infty$ and therefore 
   \begin{align*}
   1 &= \p\bigg(\abs{\{(x,m)\in \ppp : B(0,n)\cap B(x,\sqrt{(t-m)_{+}}) \neq \emptyset\}} < \infty\bigg) \\
   &\leq \p\bigg(\forall 
   y \in B(0,n): \abs{\{(x,m)\in \ppp : y \in B(x,\sqrt{(t-m)_{+}})\}} < \infty \bigg).
   \end{align*}
   Taking $n \rightarrow \infty$ on both sides and $\cap_{t \in \z}$ finishes the proof. 
   
Now assume that $\e M_-^\frac{d}{2} = \infty$. Then, using \cite[Theorem 3.2.4]{bo:SW08},
\begin{align*}
    \p(\inf_{\bfx \in \ppp} \pd(0,\bfx)=-\infty) &= \lim_{t \rightarrow-\infty} \p(\inf_{\bfx \in \ppp} \pd(0,\bfx)\leq t) = \lim_{t \rightarrow-\infty} \left(1-\e \prod_{\bfx \in \ppp} \ind \left\{\pd(0,\bfx) > t\right\}\right)\\
    &= \lim_{t \rightarrow-\infty} \left(1- \mathrm{e}^{-\gamma \lambda^d(B^d) \int_{-\infty}^t(t-m)^{d/2}\,\dx \markdist(m) }\right) = 1.
\end{align*}
In other words, there almost surely does not exist a minimizer (over $\ppp$) of the power distance to the origin $0$, which means that $L(\ppp)$ is not space filling. 
\end{proof}

Let us note that Lemma \ref{lemma:ExPoisLagTess} is also in accordance with \cite[Theorem 3.3 (iii)]{ar:GWL25}, which is stated as an implication and for absolutely continuous mark distributions. Generalization of the proof therein to a setting with a general mark distribution should also be straightforward. 

\subsection{Non-Poissonian Examples} \label{subsec:OtherEx}

It follows from Theorem \ref{thm:Main} (a) that to obtain an admissible random Laguerre generator, we seek ergodic marked point processes satisfying the moment condition \eqref{as:momentM_}. The rudimentary family of models is given by independently marked point processes. 

We say that a marked point process $\ppp$ on $\rd \times \R$ is an independently marked point process if the marks form a sequence of independent identically distributed random variables that is independent of the ground process $\ppp'$. We refer to the common distribution of the marks as the mark distribution. In the stationary case, this distribution coincides with the stationary mark distribution. To check the ergodicity or mixing property of an independently marked point process, it suffices to verify the corresponding property for the ground process.

\begin{lemma} \label{lemma:MixProgIMPP}
Let $\ppp$ be a stationary independently marked point process with intensity measure $\gamma \lambda^d \otimes \markdist$ for some $\gamma > 0$ and mark distribution $\markdist$. Assume that the ground process $\ppp'$ is ergodic or mixing, then $\ppp$ has the same property.     
\end{lemma}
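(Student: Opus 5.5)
Since ergodicity and mixing are properties of the distribution of $\ppp$, and the definitions in Definition \ref{def:MixingMPP} only involve events $A,B\in\mathcal{N}(\rd\times\R)$, the natural route is the standard reduction to a generating class. I would first show that it suffices to check the defining limits for events from a $\pi$-system that generates $\mathcal{N}(\rd\times\R)$. For mixing this is the usual approximation argument: if $A,B$ are approximated in $\p_\ppp$-measure by events $A_n,B_n$ from the generating algebra, then by stationarity $\abs{\p(\ppp\in A,T_z\ppp\in B)-\p(\ppp\in A_n,T_z\ppp\in B_n)}\le \p(\ppp\in A\triangle A_n)+\p(\ppp\in B\triangle B_n)$ uniformly in $z$. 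The averaged version for ergodicity is handled identically. Thus we may take $A,B$ depending only on $\ppp$ restricted to $C\times\R$ for a bounded Borel $C$, and indeed it suffices to consider events of the form $\{\ppp'|_C\in A'\}\cap\{\text{marks of those points}\in\cdots\}$.

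To handle the marks, I would work with Laplace-type functionals rather than events. By the above reduction (functions $e^{-\sum f}$ with $f\ge 0$ continuous and supported in $C\times\R$ for bounded $C$ span a dense subalgebra of $L^2(\p_\ppp)$), it suffices to show
\begin{equation*}
\e\Big[e^{-\sum_{\bfx\in\ppp} f(\bfx)}\,e^{-\sum_{\bfx\in T_z\ppp} g(\bfx)}\Big]\longrightarrow \e\Big[e^{-\sum_{\bfx\in\ppp} f(\bfx)}\Big]\e\Big[e^{-\sum_{\bfx\in\ppp} g(\bfx)}\Big]
\end{equation*}
(as $\norm{z}\to\infty$, respectively in Cesàro mean). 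The key step is conditioning on the ground process: since the marks are i.i.d. with law $\markdist$ and independent of $\ppp'$,
\begin{equation*}
\e\Big[e^{-\sum_{(x,m)\in\ppp} (f(x,m)+g(x+z,m))}\,\Big|\,\ppp'\Big]=\prod_{x\in\ppp'} \int e^{-f(x,m)-g(x+z,m)}\,\dx\markdist(m).
\end{equation*}
For $\norm{z}$ larger than the diameter of the union of the supports, at most one of $f(x,\cdot),g(x+z,\cdot)$ is nonzero for each $x$, so the product factorizes as $e^{-\sum_{x\in\ppp'}u(x)}\,e^{-\sum_{x\in T_z\ppp'}v(x)}$ with $u(x)\coloneqq-\log\int e^{-f(x,m)}\dx\markdist(m)$ and $v$ analogous. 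Here $u,v\ge 0$ are measurable and supported on bounded sets. Thus the joint functional of $\ppp$ equals the joint functional of $\ppp'$ with $u,v$, and the mixing (or ergodicity) of $\ppp'$ applied to the measurable functions $e^{-\sum_{\ppp'}u}$, $e^{-\sum_{\ppp'}v}$ gives the limit. The same computation without $g$ identifies the limit as the product of the marginal functionals of $\ppp$.

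The main obstacle is the density step: justifying that the Laplace functionals with bounded-support $f$ suffice, and in the ergodic case handling the finitely many $z$ in $(-a,a)^d$ where the supports overlap. For the latter, the set of such $z$ has bounded Lebesgue measure, so it contributes $O(a^{-d})$ to the average and vanishes. For the density, I would argue that the linear span of such exponentials is an algebra that separates points of $\mathsf{N}(\rd\times\R)$ and generates $\mathcal{N}$, hence is dense in $L^2(\p_\ppp)$ by a monotone class argument. The passage from bounded functions to events is then the same approximation used in the first step.
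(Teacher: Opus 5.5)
Your proof is correct, but it takes a different route from the paper.

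\textbf{The paper's route.} The paper reduces, via Lemma 12.3.II of Daley--Vere-Jones, to count events $\{\ppp(B_i\times M_i)=k_i\}$. It then conditions on the ground counts $\ppp'(B_i)=j_i$. Once the shifted window is disjoint from the other one, the mark counts become independent binomials with weights $p_i(j_i)$. The paper then exchanges the averaged limit with the double series over $(j_1,j_2)$ and applies ergodicity or mixing of $\ppp'$ to the events $\{\ppp'(B_1)=j_1\}$ and $\{\ppp'(B_2)=j_2\}$.

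\textbf{Your route.} You condition at the level of Laplace functionals instead. The factorisation $\prod_{x\in\ppp'}\int e^{-f(x,m)-g(x+z,m)}\,\dx\markdist(m)$ converts the joint functional of $\ppp$ into a single pair of bounded functionals $e^{-\sum u}$, $e^{-\sum v}$ of the ground process. This buys two things:
\begin{itemize}
\item There is no series to exchange with the limit, a step the paper leaves implicit.
\item Your test class is closed under multiplication, so the monotone class argument reaches the whole $\sigma$-algebra at once. By contrast, single-set count events are not closed under intersection. Completing the paper's reduction would therefore need joint counts over finitely many sets, i.e.\ a multinomial version of its computation.
\end{itemize}

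\textbf{Trade-off.} The paper's route is more elementary and lets it cite the reduction lemma. Yours is self-contained, at the cost of proving the density step yourself. You do this correctly, using the uniform-in-$z$ bound that comes from stationarity.

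\textbf{Wording fix.} In the ergodic case, the shifts $z$ for which the supports overlap form a bounded set of finite Lebesgue measure, not a finite set. That is exactly what your $O(a^{-d})$ estimate uses.
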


\begin{proof}
    We consider the proof of the ergodic property. It follows from \cite[Lemma  12.3.II]{bo:DVJ08b} that it is enough to show 
    \begin{align*} 
    \begin{split}
        \lim_{a \rightarrow \infty}\frac{1}{\lambda^d((-a,a)^d)} \int_{(-a,a)^d} \p(T_z(\ppp)&(B_1\times M_1)=k_1,\ppp(B_2\times M_2)=k_2)\,\dx z 
        \\ &=  \p(\ppp(B_1\times M_1)=k_1)\p(\ppp(B_2\times M_2)=k_2)
        \end{split}
    \end{align*}
for any $k_1,k_2 \in \n_0$, $B_1,B_2 \in \borel(\rd)$ bounded and $M_1,M_2 \in \borel(\R)$. Choose such $k_1$, $k_2$, $B_1$, $B_2$, $M_1$, $M_2$, let $a_0 > 0$ be such that $\operatorname{dist}(B_1,B_2)\leq a_0$ and denote $p_i(j)\coloneqq\binom{j}{k_i}\markdist(M_i)^{k_i}(1-\markdist(M_i))^{j-k_i}$ for $j \geq k_i$, $i = 1,2$. Then we can write, thanks to the independence of the ground process and the sequence of marks, 
\begin{align*}
    \lim_{a \rightarrow \infty}\frac{1}{(2a)^d} &\int_{(-a,a)^d} \p(T_z(\ppp)(B_1\times M_1)=k_1,\ppp(B_2\times M_2)=k_2)\,\dx z \\
    &= \lim_{a \rightarrow \infty}\frac{1}{(2a)^d}  \int_{(-a,a)^d\setminus B(0,a_0)} \sum_{j_i \geq k_i,i=1,2}^\infty p_1(j_1)p_2(j_2)\p(T_z(\ppp')(B_1)=j_1,\ppp'(B_2)=j_2)\,\dx z \\
    &=  \sum_{j_i \geq k_i,i=1,2}^\infty p_1(j_1)p_2(j_2) \lim_{a \rightarrow \infty}\frac{1}{(2a)^d}  \int_{(-a,a)^d}\p(T_z(\ppp') (B_1)=j_1,\ppp'(B_2)=j_2)\,\dx z \\
     &=  \sum_{j_i \geq k_i,i=1,2}^\infty p_1(j_1)p_2(j_2) \p(\ppp' (B_1)=j_1)\p(\ppp'(B_2)=j_2) \\ &= \p(\ppp(B_1\times M_1)=k_1)\p(\ppp(B_2\times M_2)=k_2).
\end{align*}
    The mixing property would be shown analogously. 
\end{proof}

As a first example, consider the Cox process also known as doubly stochastic Poisson process.

\begin{example}\label{ex:Cox}
    Let $\Psi$ be a random measure on $\rd$. We say that a point process $\ppp'$ is a Cox point process on $\rd$ with driving measure $\Psi$, if conditionally on $\Psi$ it is a Poisson point process on $\rd$ with intensity measure $\Psi$ (see \cite[Definition 6.2.I]{bo:DVJ03a}).  It follows from \cite[Proposition 12.3.VII]{bo:DVJ08b} and Lemma \ref{lemma:MixProgIMPP} that an independently marked Cox process $\ppp$ (ground process $\ppp'$ is a Cox process) with stationary ergodic driving measure $\Psi$ and mark distribution $\markdist$ satisfying \eqref{as:momentM_} is an admissible Laguerre generator. Furthermore, if $\Psi$ is mixing random measure, then also $\ppp$ is mixing which implies mixing of $L(\ppp)$. For a discussion about the $\alpha$-mixing property of (unmarked) Cox processes see for example \cite{ar:WG09}. 
\end{example}

The next standard example is the marked cluster process. For simplicity, we present only the special case of independent cluster processes with finite clusters, see \cite[Definition 6.3.I]{bo:DVJ03a} for the general definition. 

\begin{example} \label{ex:Cluster}
    Let $\Psi$ be a simple point process on $\rd$ and let $\{\xi_i\}_{i \in \n}$ be a sequence of iid finite point processes in $\rd$ such that $\xi_1$ is concentrated on some bounded set. Then $\ppp'\coloneqq \sum_{X_i \in \Psi} (\xi_i+X_i)$ is  the cluster process with centre process $\Psi$ and iid clusters $\{\xi_i\}_{i \in \n}$. It follows from \cite[Proposition 12.3.IX]{bo:DVJ08b} and Lemma \ref{lemma:MixProgIMPP} that an independently marked cluster process $\ppp$ with stationary ergodic centre process, iid clusters and mark distribution satisfying \eqref{as:momentM_} is an admissible Laguerre generator. Furthermore, if the centre process is mixing, also $\ppp$ is mixing. If the centre process $\Psi$ is a stationary Poisson point process, the cluster process is also $\alpha$-mixing, see \cite[Remark 6.3]{ar:HM99}.  
\end{example}

Let us briefly mention some other examples without going into the details of their formal definitions. To obtain admissible Gibbs--Laguerre generators, it is enough to realize that by the proof of the existence of an infinite-volume Gibbs process in \cite{ar:RZ20}, we obtain a stationary marked Gibbs point process that is concentrated on the set $\Msettemp$. Therefore, by Lemma \ref{lemma:ExistenceLagTess}, we obtain an admissible Laguerre generator, albeit with non-positive marks only, as the existence theorem in \cite{ar:RZ20} ensures the existence of $\hat{\ppp}$ rather than $\ppp$. Furthermore, suitable independently marked Gibbs processes are $\alpha$-mixing admissible random Laguerre generators, see \cite{ar:H92}.
Other examples of $\alpha$-mixing admissible random Laguerre generators include also dependently thinned (Poisson) point processes, see \cite[Remark 6.3]{ar:HM99}, and suitable subclass of independently marked determinantal point processes, see \cite{ar:BW19, ar:PDL19} for more details. 

Finally, we present an example of a marked point process with some dependency structure in the marking procedure. 

\begin{example} \label{ex:Geostatistical}
Let $\ppp'$ be a simple stationary point process on $\rd$ and let $M=\{M(x): x \in \rd\}$ be a real-valued stationary random field. Assume that $\ppp'$ and $M$ are independent. Then the process $\ppp = \sum_{x \in \ppp'} \delta_{(x,M(x))}$ is called the geostatistically marked point process and we have the following bound
\begin{equation*} 
\alpha(\sigma(\ppp \cap (B_1 \times \R)),\sigma(\ppp \cap (B_2 \times \R))) \leq \alpha(\sigma(\ppp' \cap B_1 ),\sigma(\ppp' \cap B_2)) + \alpha(\sigma(M \cap B_1),\sigma(M \cap B_2)).
\end{equation*}
The proof proceeds exactly as in \cite[Lemma 5.1]{ar:HLS14}, where the result is established for a related $\beta$-mixing coefficient. Therefore, if both $\ppp'$ and $M$ are $\alpha$-mixing (in the same sense as Definition \ref{def:MixingMPP}~(iii)), then $\ppp$ is an $\alpha$-mixing admissible Laguerre generator. 
\end{example}

\subsubsection*{Acknowledgements.} 
ZP was supported by the Czech Science Foundation (project no.\,24-10822S).
MŠP was supported by the Charles University Grant Agency (project no.\,70524) and the Charles University Research Center (program No.\,UNCE/24/SCI/022).

\end{document}